\def\ds@whichfont{dsrom}
\DeclareMathAlphabet{\mathds}{U}{\ds@whichfont}{m}{n}
\numberwithin{equation}{section}
\newtheorem{theorem}{Theorem}
\newtheorem{lemma}[theorem]{Lemma}
\newtheorem{corollary}[theorem]{Corollary}
\newtheorem{proposition}[theorem]{Proposition}
\theoremstyle{definition}
\newtheorem{definition}[theorem]{Definition}
\newtheorem{assumption}[theorem]{Assumption}
\newtheorem{remark}[theorem]{Remark}
\newtheorem{example}[theorem]{Example}
\theoremstyle{plain}
\numberwithin{figure}{section} 
\theoremstyle{plain}
\theoremstyle{plain}
\theoremstyle{remark}
\newtheorem*{acknowledgement*}{Acknowledgement}
\theoremstyle{example}
\newcommand{\cB}{{\mathcal B}}
\newcommand{\cC}{{\mathcal C}}
\newcommand{\cE}{{\mathcal E}}
\newcommand{\cF}{{\mathcal F}}
\newcommand{\cL}{{\mathcal L}}
\newcommand{\cX}{{\mathcal X}}
\newcommand{\te}{{\theta}}
\newcommand{\Om}{{\Omega}}
\newcommand{\om}{{\omega}}
\newcommand{\ve}{{\varepsilon}}
\newcommand{\del}{{\delta}}
\newcommand{\Del}{{\Delta}}
\newcommand{\Gam}{{\Gamma}}
\newcommand{\sig}{{\sigma}}
\newcommand{\al}{{\alpha}}
\newcommand{\be}{{\beta}}
\newcommand{\la}{{\lambda}}
\newcommand{\bbC}{{\mathbb C}}
\newcommand{\bbE}{{\mathbb E}}
\newcommand{\bbN}{{\mathbb N}}
\newcommand{\bbP}{{\mathbb P}}
\newcommand{\bbR}{{\mathbb R}}
\def\fm{{\mathfrak{m}}}
\newcommand{\DS}{\displaystyle}
\begin{document}
\title[]{Non-uniform Berry-Esseen theorems for weakly dependent random variables}
 \author{Yeor Hafouta}
\address{The University of Florida}

\email{yeor.hafouta@mail.huji.ac.il}

\dedicatory{  }
 \date{\today}
\maketitle

\begin{abstract}
We obtain non-uniform Berry-Esseen type estimates for several classes of weakly dependent sequences of random variables, including uniformly elliptic inhomogeneous Markov chains, random and time-varying (partially) hyperbolic or expanding dynamical systems, products of random matrices and some classes of local statistics. 
As an application of the non uniform expansions we obtain average versions the Berry-Esseen theorem, which provide estimates of the underlying distribution function in $L^p(dx)$ by the standard normal distribution function.
An additional  application is to estimation  estimates of  expectations $\bbE[h(S_n)]$ of  functions $h$   of the underlying sequence $S_n$, whose derivatives grow at most polynomially fast by corresponding Gaussian expectation. In particular we provide estimates on the moments of $S_n$ by means of the corresponding moments of a standard normal random variable and the variance of  $S_n$. 
\end{abstract}

\section{Introduction}

\subsection{Berry-Esseen theorems} 
Let $S_n$ be a sequence of centered random variables so that $\sig_n=\|S_n\|_{L^2}\to\infty$. Recall that $S_n$ obeys the (self-normalized) central limit theorem (CLT) if $W_n=S_n/\sig_n$ converges in distribution to the standard normal law, namely for every $x\in\bbR$,
$
\lim_{n\to\infty}\bbP(W_n\leq x)=\Phi(x):=\frac{1}{\sqrt{2\pi}}\int_{-\infty}^x e^{-t^2/2}dt.
$ 
Recall that the optimal CLT rate in general is $O(\sig_n^{-1})$, which is the rate in the classical Berry-Esseen theorem. We thus
we say that $W_n$ obeys the  Berry-Esseen theorem  if 
$$
\sup_{x\in\bbR}\left|\bbP(W_n\leq x)-\Phi(x)\right|=O(\sig_n^{-1}).
$$ 
In this paper we will consider partial sums $S_n=\sum_{j=1}^nX_j$. In the past decades optimal CLT rates have been proven to many classes of weakly dependent  summands $X_j$.
First results for independent summands were proven in \cite{Berry, Ess} by Berry and Esseen, and  by now optimal rates have been established for several classes of weakly dependent summands
when $\sig_n^2=\text{Var}(S_n)$ grows linearly fast in $n$ (see \cite{Nag2, Stein72, CJ, RE, GH, SaulStat, RioBE, HH, GO, Jirak2016, HK} for a partial list). Three exceptions are  \cite{DolgHaf, DolgHaf PTRF 2, MarShif1} where optimal rates are obtained for additive functionals of  inhomogeneous Markov chains, for sequential dynamical systems and for inhomogeneous Markov shifts. In these papers $\sig_n^2$ can grow arbitrarily slow.


\subsection{Main results: non-uniform Berry-Esseen theorems }\label{Intro2}
As opposed to the results discussed in the previous section, this paper concerning non-uniform estimates, which for the sake of convenience are described here as definitions.
 \begin{definition}
We say that $W_n$ obeys the non-uniform Berry-Esseen theorem with power $s\geq 1$ if  there is a constant $C=C_s>0$ for that for every $x\in\bbR$ we have
$$
\left|\bbP(W_n\leq x)-\Phi(x)\right|\leq C_s(1+|x|)^{-s}\sig_n^{-1}.
$$
\end{definition}

Non uniform Berry-Esseen theorems for  partial sums of iid summands are by now a classical result, see for instance \cite{Nag65}, \cite{Osipov67},  \cite{Osipov72} and \cite{Petrov72}. See also  \cite{ChenShao2001} for a non-uniform Berry-Esseen theorems for independent but not necessarily identically distributed summands, \cite{ChenShao2004} for local dependent summands, \cite{Jirak2015, Jirak2016, Jirak2023} for  Bernoulli shifts, \cite{[DewanandRao2005]} for associated sequences and \cite{Aus} for $\alpha$-mixing stationary random fields.
We also refer to \cite{BE}, \cite{Shao} and \cite{Tu} for non-uniform bounds in different contexts. However, despite the variety of applications of non uniform Berry-Esseen theorems (described  below), the literature about non-uniform estimates for weakly dependent random variables is not as vast as the literature in the uniform case. The goal of this paper is to extend the known results from the uniform case to the non-uniform case, as will be discussed in the following sections.
 This will be done by using an abstract scheme that allows us to capture
 the following types of sequences:
\begin{itemize}

\item Partial sums $S_n=\sum_{j=0}^{n-1}g\circ T^j$ generated by a chaotic  dynamical system $T$ and a sufficiently regular observable $g$;
\vskip0.1cm

\item Partial sums $S_n=\sum_{j=1}^n g(\xi_j)$ generated by an homogeneous geometrically ergodic\footnote{Instead of geometric ergodicity, we can assume that the corresponding Markov operator has a spectral gap on an appropriate Banach space $B$, and that $g\in B$.} Markov chain $\{\xi_j\}$ and a bounded function $g$.

\vskip0.1cm
\item Partial sums generated by  random  (sequential) choatic dynamical systems and  random (sequential) sufficiently regular observables.
\vskip0.1cm
\item $S_n=\ln \|A_nA_{n-1}\cdots A_1x\|$ where $A_j$ are iid (strongly irreducible) random matrices and $x$ is a unit vector.
\vskip0.1cm
\item Partial sums $S_n=\sum_{j=1}^{n}f_j(\xi_j,\xi_{j+1})$ of additive functionals $f_j$ of uniformly elliptic (not necessarily homogeneous) Markov chains $\xi_j$.
\vskip0.1cm
\item Some classes of local statistics.
\end{itemize}
In the above examples what is currently known are only uniform Berry-Esseen theorems (i.e. when $s=0$). See, for instance \cite{DolgHaf} for inhomogeneous Markov chains, \cite{GH,GO, HH} for chaotic dynamical systems and stationary Markov chains, \cite{FP} for random matrices and  \cite[Sections 3-5]{Dor} for local statistics. The non-uniform Berry-Esseen theorems proven in this paper are new already for all of these examples.
As will be discussed in the next sections, once the appropriate non-uniform estimates are obtained several other results will follow. Let us also remark that relying on the results of a previous version of this paper non-uniform Berry-Esseen theorems were obtained for several sequential dynamical systems \cite{DolgHaf PTRF 2} and for Markov shifts \cite{MarShif1}. However, some of our examples in the random (or sequential) cases are not covered in  \cite{DolgHaf PTRF 2} and \cite{MarShif1}.

\section{Main abstract result}\label{Main}
Let $(S_n)$ be a sequence of  centered random variables so that, $\sig_n=\|S_n\|_{L^2}\to\infty$. Let us define 
$W_n=\frac{S_n-A_n}{B_n}$, where  $A_n$ is a bounded sequence and $B_n=\sig_n+O(1)$.
For instance, we can take $A_n=0$ and $B_n=\sig_n$ which corresponds to the centralized self-normalized case, but we will see in some of our application to stationary sequences and products of random matrices that the choice of $A_n=c+O(\del^n)$, for some $\del\in(0,1),\, c\in\bbR$ and $B_n=\sig\sqrt n$, $\sig>0$ is more natural.
Let $$F_n(x)=\bbP(W_n\leq x)$$ be the distribution function of $W_n$. In this section we will describe our non uniform Berry-Esseen theorems, under conditions which involve the difference between the logarithmic characteristic function of $S_n/\sig_n$ and the standard normal one, 
which is given by
$$
\Lambda_n(t)=\ln\bbE[e^{itS_n/\sig_n}]+t^2/2.
$$

We consider here the following assumption (for different values of $m$) which was introduced in \cite{DolgHaf}.
\begin{assumption}\label{GrowAssum}
For some integer $m\geq2$, for all $3\leq j\leq m+1$ there exist constants $C_j,\ve_j>0$ so that
\begin{equation}\label{GAs}
\sup_{t\in[-\ve_j\sig_n,\ve_j\sig_n]}|\Lambda_n^{(j)}(t)|\leq C_j\sig_n^{-(j-2)}.
\end{equation}
\end{assumption}
\begin{remark}
Assumption \ref{GrowAssum} only concerns the derivatives of $\ln\bbE[e^{itS_n/\sig_n}]$ (since for $j\geq 3$ the second term vanishes). However,  it is more natural to present the results using the difference $\Lambda_{n}(t)$ since it measures the deviation from normality in an appropriate sense.
\end{remark}

\begin{remark}\label{Add}
Assumption \ref{GrowAssum} clearly holds true for every $m$ when 
$$
\gamma_j(S_n/\sig_n)\leq C^j(j!)\sig_n^{-(j-2)}
$$
for some constant $C$ and all $j\geq3$,
where $\gamma_j(W)$ is the $j$-th cumulant of a random variable $W$ (defined in \eqref{CumDef}).  
This is the case for  partial sums $S_n=\sum_{j=1}^n f_j(\xi_j)$ generated by exponentially fast  $\phi$ mixing Markov chains $(\xi_j)$ and   uniformly bounded functions $f_j$ such that  $\text{Var}(S_n)$ grows linearly fast  (see \cite[Theorem 4.26]{SaulStat}). Another example are
 U-statistics of the form $S_n=\sum_{1\leq i<j\leq n}h(X_i,X_j)$ generated by iid $X_j$ with $\mathbb E[|h(X_1,X_2)|^j]\leq C^j$ (see \cite[Section 3]{Dor}). Other examples include characteristic polynomials in the circular ensembles and determinantal point processes as in \cite[Section 4-5]{Dor}. Already for these examples Theorem \ref{BE2} is new (the uniform estimates are known), and we refer the readers' to Sections  \ref{Stat} and \ref{NonStat} for a detailed description of other examples where Assumption \ref{GrowAssum} holds.
\end{remark}

Our first result is the following non-uniform Berry-Esseen theorem:
\begin{theorem}\label{BE2}
Under Assumption \ref{GrowAssum}, the non-uniform Berry-Esseen theorem with power $s=m$ holds true, namely for all $x\in\bbR$ we have
$$
\left|F_n(x)-\Phi(x)\right|\leq C\sig_n^{-1}(1+|x|)^{-m}
$$
where $C$ is some constant. In particular, for every $p>1/m$ we have
$$
\|F_n-\Phi\|_{L^p(dx)}=O(\sig_n^{-1}).
$$
\end{theorem}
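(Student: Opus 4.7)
The proof will proceed by Fourier analysis. I would first reduce to the self-normalized case $W_n=S_n/\sig_n$, which loses at most an $O(\sig_n^{-1})$ term absorbed by the right-hand side (using $A_n=O(1)$, $B_n=\sig_n+O(1)$); this is the content of Section~\ref{Reduc}. For $|x|\le x_0$ in a bounded range, the claim reduces to the uniform Berry--Esseen bound $|F_n(x)-\Phi(x)|\le C\sig_n^{-1}$, which itself follows from Assumption~\ref{GrowAssum} (case $j=3$) via the classical Esseen smoothing inequality applied to the difference $\phi_n(t)-e^{-t^2/2}=e^{-t^2/2}(e^{\Lambda_n(t)}-1)$.

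The real work concerns $|x|\ge x_0$. The guiding principle is the Fourier duality between multiplication by $x^m$ in physical space and $m$-fold differentiation in frequency space: the bound $(1+|x|)^m|F_n(x)-\Phi(x)|\le C\sig_n^{-1}$ should follow from controlling derivatives up to order $m$ of $\phi_n(t)-e^{-t^2/2}$. Concretely, I would establish a weighted smoothing inequality of the form
\begin{equation*}
|x|^m|F_n(x)-\Phi(x)|\le C_m\sum_{k=0}^{m}\int_{-T}^{T}\left|\frac{d^k}{dt^k}\frac{\phi_n(t)-e^{-t^2/2}}{it}\right|dt+O(T^{-1})
\end{equation*}
with $T\asymp\sig_n$, obtained by integrating by parts $m$ times inside the Fourier inversion formula for $F_n-\Phi$. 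To bound the right-hand side, I would write $\phi_n(t)-e^{-t^2/2}=e^{-t^2/2}(e^{\Lambda_n(t)}-1)$ and exploit that $\Lambda_n$ vanishes to order three at $0$ (since $\Lambda_n(0)=\Lambda_n'(0)=\Lambda_n''(0)=0$). Assumption~\ref{GrowAssum} then gives, by Taylor's theorem, $|\Lambda_n(t)|\le C|t|^3/\sig_n$ and $|\Lambda_n^{(j)}(t)|\le C_j\sig_n^{-(j-2)}$ on $|t|\le\ve\sig_n$. The factor $1/t$ in the integrand is absorbed by the cubic vanishing of $\Lambda_n$, and expanding the $k$-th derivative via Leibniz and Fa\`a di Bruno yields a sum in which every term carries at least one factor of $\Lambda_n$ or a derivative of it, producing the overall $\sig_n^{-1}$ gain; the Gaussian prefactor $e^{-t^2/2}$ tames the polynomial growth in $t$ coming from the combinatorics, so each integral is uniformly convergent in $n$.

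The main obstacle will be the bookkeeping in the Fa\`a di Bruno expansion of $(e^{\Lambda_n(t)}-1)e^{-t^2/2}/t$: one must verify that every term, after integration, genuinely yields a bound of order $\sig_n^{-1}$ rather than a worse power of $\sig_n$, so that the summation over partitions does not degrade the estimate. A secondary concern is the tail $|t|\in(\ve\sig_n,T]$, which Assumption~\ref{GrowAssum} does not cover; handling it will require an auxiliary decay estimate on $|\phi_n(t)|$ which for each of the examples of Section~\ref{IntEg} follows from mixing or spectral-gap arguments (and in the abstract setting is provided in Section~\ref{SN}). Finally, the $L^p$ statement is immediate from the pointwise bound, since $(1+|x|)^{-mp}\in L^1(dx)$ precisely when $p>1/m$.
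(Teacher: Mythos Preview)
Your approach is essentially the paper's: reduce to the self-normalized case, apply a weighted Esseen smoothing inequality (the paper invokes the version in \cite[Ch.~VI, Lemma~8]{Petrov72}, which involves only the zeroth and the $m$-th derivative of $f_n-g$ rather than all intermediate ones), and control those derivatives via Fa\`a di Bruno together with the bounds $|\Lambda_n^{(j)}(t)|\le C_j\sig_n^{-(j-2)}$ from Assumption~\ref{GrowAssum}. Two remarks on the differences.

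First, the paper does not compare $f_n$ to $e^{-t^2/2}$ directly but to the Edgeworth approximant $g_{m,n}(t)=e^{-t^2/2}(1+P_{m,n}(it))$, obtaining the sharper intermediate bound $|F_n(x)-\Phi_{m,n}(x)|\le C\sig_n^{-(m-1)}(1+|x|)^{-m}$ (Proposition~\ref{Prp3}); the passage from $\Phi_{m,n}$ to $\Phi$ is then a separate elementary step (Lemma~\ref{NonULemma}) costing only $\sig_n^{-1}$. This detour is taken because the very same estimate on $f_n-g_{m,n}$ drives the proof of the Edgeworth expansion (Theorem~\ref{Edge2}); for Theorem~\ref{BE2} alone your direct comparison to $e^{-t^2/2}$ is perfectly fine and slightly shorter.

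Second, your ``secondary concern'' about the tail $|t|\in(\ve\sig_n,T]$ is a non-issue for this theorem. Since the remainder in the smoothing inequality is $O(K/T)$ and you only need it to be $O(\sig_n^{-1})$, you simply take $T=c\sig_n$ with $c$ small enough that the entire integration range lies inside the region where Assumption~\ref{GrowAssum} applies. No decay of $|f_n(t)|$ beyond that region is required for Theorem~\ref{BE2}; such decay (Assumption~\ref{DerAss}) only enters in Theorem~\ref{Edge2}, where $T$ must be of order $\sig_n^{m-2}$.
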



\subsection{Gaussian estimates of functions with polynomially fast growing derivatives}

The following result follows from Theorem \ref{BE2}.
\begin{corollary}\label{Cor}
Let $h:\bbR\to\bbR$ be  an a.e. differentiable function so that $H_m=\int\frac{|h'(x)|}{(1+|x|)^m}dx<\infty$.  
\vskip0.1cm
Under Assumption \ref{GrowAssum}, there is a constant $R>0$ which does not depend on $h$ so that 
$$
\left|\bbE[h(W_n)]-\int h(x)\varphi(x)dx\right|\leq RH_m\sig_n^{-1}.
$$
In particular, for every $q<m$ we have 
$$
\left|\bbE[W_n^q]-\int x^q\varphi(x)dx\right|\leq RH_m\sig_n^{-1}
$$
and 
$$
\left|\bbE[|W_n|^q]-\int |x|^q \varphi(x)dx\right|\leq RH_m\sig_n^{-1}
$$ 
where $R_m$ depends only on $m$.
\end{corollary}
The proof is simple and straight forward, and so we present it here.
\begin{proof}
  We have
\begin{equation}\label{int form}
\bbE[h(W_n)]=-\bbE\left[\int_{W_n}^{\infty}h'(x)dx\right]=-\int_{-\infty}^{\infty}h'(x)\bbP(W_n\leq x)dx=
-\int_{-\infty}^{\infty}h'(x)F_n(x)dx
\end{equation}
and so if $Z$ is a standard normal random variable then
$$
\left|\bbE[h(W_n)]-\bbE[h(Z)]\right|\leq\int |h'(x)||F_n(x)-\Phi(x)|dx\leq RH_m\sig_n^{-1}.
$$   
\end{proof}



\section{Proof of Theorem \ref{BE2}}\label{SN}
\subsection{``Reduction" to the self normalized case: proof of Theorem \ref{BE2}  relying on the self-normalized case}\label{Reduc}
In this section we will prove Theorem \ref{BE2} based on the validity of the theorem in the self-normalized case when $B_n=\sig_n$ and $A_n=0$. Let $W_n=S_n/\sig_n$ and $Z_n=\frac{W_n-A_n}{B_n}$ where $B_n=\sig_n+O(1)$ and $A_n=c+O(\delta^n)$. Then 
$$
\sup_n\|W_n-Z_n\|_{L^2}<\infty.
$$
Thus, the reduction is carried out exactly like in \cite[Section 3.4]{MarShif1}.

\subsection{The Edgeworth polynomials and their Fourier transforms}\label{Form}
Let us denote by 
\begin{equation}\label{CumDef}
\gamma_j(W)=i^{-j}\left(\frac{d^j}{dt^j}\,\bbE[e^{it W}]\right)\Big|_{t=0}    
\end{equation}
 the $j$-th cumulant of a random variable $W$ with  finite absolute $j$ moments.
Let $H_s$ be the $s$-th Hermite polynomial, which is defined through the identity $(-1)^s H_s(x)\varphi(x)=\varphi^{(s)}(x)$, where $\varphi(x)=\frac{1}{\sqrt{2\pi}}e^{-\frac12x^2}$ is the standard normal density. Note that $\gamma_j(W_n)=\Lambda_n^{(j)}(0)$ and so
  by Assumption \ref{GrowAssum}, for all $3\leq j\leq m+1$,
\begin{equation}\label{Cum}
\gamma_j(W_n)=\Lambda_n^{(j)}(0)=O(\sig_n^{-(j-2)})
\end{equation}
where $\Lambda_n$ comes from the main Assumption \ref{GrowAssum}.

Next, let us consider the following polynomials 
\begin{equation}\label{P def}
P_{m,n}(z)=\sum_{\bar k}\frac1{k_1!\cdots k_{m-2}!}\left(\frac{\gamma_3(W_n)}{3!}\right)^{k_1}\cdots \left(\frac{\gamma_{m}(W_n)}{m!}\right)^{k_{m-2}}z^{3k_1+...+mk_{m-2}}
\end{equation}
where the summation runs over the collection of $m-2$ tuples of nonnegative integers $(k_1,...,k_{m-2})$ that are not all $0$ so that $\sum_{j} jk_j\leq m-2$. Let $\nu_{m,n}$ be the signed measure on $\bbR$ whose Fourier transform is 
$$
g_{m,n}(t)=e^{-t^2/2}(1+P_{m,n}(it)).
$$
Then $\nu_{m,n}$ is absolutely continuous with respect to the Lebesgue measure with density
$$
\varphi_{m,n}(x)=\varphi(x)\left(1+\sum_{\bar k}\frac1{k_1!\cdots k_{m-2}!}\left(\frac{\gamma_3(W_n)}{3!}\right)^{k_1}\cdots \left(\frac{\gamma_{m}(W_n)}{m!}\right)^{k_{m-2}}H_k(x)\right)
$$
where $\varphi(x)=\frac{1}{\sqrt{2\pi}}e^{-x^2/2}$ is the standard normal density function and $k=k(k_1,...,k_{m-2})=3k_1+...+mk_{m-2}$. Using that $H_k(x)\varphi(x)=-\left(H_{k-1}(x)\varphi(x)\right)'$ we see that the corresponding generalized distribution function is 
\begin{equation}\label{Phi m n def 1}
\Phi_{m,n}(x)=
\end{equation}
$$\int_{-\infty}^{x}\varphi_{m,n}(t)dt=\Phi(x)-\varphi(x)\sum_{\bar k}\frac1{k_1!\cdots k_{m-2}!}\left(\frac{\gamma_3(W_n)}{3!}\right)^{k_1}\cdots \left(\frac{\gamma_{m}(W_n)}{m!}\right)^{k_{m-2}}H_{k-1}(x).
$$ 
Note that $\Phi_{m,n}(-\infty)=0$ and 
$$
\Phi_{m,n}(\infty)=\int_{-\infty}^{\infty}\varphi_{n,m}(x)dx=g_{m,n}(0)=1.
$$
Notice also that for $m=2$ we have $P_{2,n}=0$ and so $\nu_{2,n}$ is the standard normal law and $\Phi_{2,n}=\Phi$ is the standard normal distribution function.

Next, observe that the function $\Phi_{n,m}(x)$ can also be written in the form
\begin{equation}\label{BoldP}
\Phi_{n,m}(x)=\Phi(x)-\varphi(x)\sum_{r=1}^{m-2}\sig_n^{-r}\textbf{H}_{r,n}(x)
\end{equation}
where
\begin{equation}\label{Poly def 1.1}
\textbf{H}_{r,n}(x)=\sum_{\bar k\in A_{r}}C_{\bar k}\prod_{j=1}^{s}\left(\gamma_{j+2}(S_n)\sig_n^{-2}\right)^{k_j}H_{k-1}(x)
\end{equation}
and $A_{r}$ is the set of all  tuples of nonnegative integers $\bar k=(k_1,...,k_{s}), k_s\not=0$ for some $s=s(\bar k)\geq 1$ so that $\sum_{j}jk_j=r$ (note that when $r\leq m-2$ then $s\leq m-2$ since $k_s\geq1$). Moreover, $k=3k_1+...(s+2)k_s$ and 
 $$
 C_{\bar k}=\prod_{j=1}^{s}\frac{1}{k_j!(j+2)^{k_j}}.
 $$
 We note that  the polynomials
$\textbf{H}_{r,n}$ have bounded coefficients (because of \eqref{Cum}) and that  their degrees does not depend on $n$. The  polynomails $\textbf{H}_{r,n}$ coincide with the ``Edgeworth polynomials" defined in \cite{DolgHaf} (denoted there by $P_{r,n}$).

We also note that the  Fourier transform of the derivative of $\Phi_{m,n}(x)$ has the form 
$$
g_{m,n}(t)=e^{-t^2/2}\left(1+\sum_{r=1}^{m-2}\sig_n^{-r}\textbf{P}_{j,n}(t)\right)
$$
where 
$$
\textbf{P}_{r,n}(t)=\sum_{\bar k\in A_{r}}C_{\bar k}\prod_{j=1}^{s}\left(\gamma_{j+2}(S_n)\sig_n^{-2}\right)^{k_j}(it)^{3k_1+...+(s+2)k_{s}}.
$$

\subsection{Proof of Theorem \ref{BE2} in the self normalized case}

The proof of Theorem \ref{BE2}   relies on the following two results, whose proof is postponed to the next sections.


\begin{proposition}\label{Prp17.1}
Under Assumption \ref{GrowAssum}, 
if $|t|\geq C\sig_n^{1/3}$ for some $C>0$ then for  $p=0,1,2,...,m$ we have
$$
\left|g_{m,n}^{(p)}(t)\right|\leq C_m\sig_n^{-(m+1)}e^{-c_0t^2}
$$
where $C_m$ depends only on $C,m$ and the constants in Assumption \ref{GrowAssum} and $c_0\in(0,1/2)$ is a constant (which, upon increasing $C_m$, can be made arbitrarily close to $1/2$).
\end{proposition}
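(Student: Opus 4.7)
My plan is to exploit the fact that $g_{m,n}(t)=e^{-t^2/2}(1+P_{m,n}(it))$ is a completely explicit Gaussian-times-polynomial function, so no probabilistic input beyond Assumption \ref{GrowAssum} itself is needed. By the definition of $P_{m,n}$ in \eqref{P def}, a generic monomial has exponent $3k_1+\dots+mk_{m-2}=\sum_j(j+2)k_j\le 3(m-2)$ under the constraint $\sum_j jk_j\le m-2$, so the degree of $P_{m,n}$ is bounded by a constant $d_m$ depending only on $m$. Because $\gamma_j(W_n)=\Lambda_n^{(j)}(0)=O(\sig_n^{-(j-2)})$ for $3\le j\le m+1$ under Assumption \ref{GrowAssum}, each $\gamma_j(W_n)$ is in particular uniformly bounded in $n$, hence so are the coefficients of $P_{m,n}(it)$. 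Therefore $1+P_{m,n}(it)$ is a polynomial in $t$ of degree $\le d_m$ with coefficients bounded by some $M_m$, both depending only on $m$.

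Next I would apply the Leibniz rule. Using $(e^{-t^2/2})^{(k)}=(-1)^kH_k(t)e^{-t^2/2}$ with $H_k$ the $k$-th Hermite polynomial, and writing $Q_k(t)=\frac{d^k}{dt^k}(1+P_{m,n}(it))$, one obtains
$$
g_{m,n}^{(p)}(t)=e^{-t^2/2}\sum_{k=0}^{p}\binom{p}{k}(-1)^{p-k}H_{p-k}(t)Q_k(t).
$$
Each $Q_k$ is a polynomial of degree $\le d_m$ whose coefficients are bounded by a constant depending only on $m$ and $p$, and each $H_{p-k}$ is a fixed Hermite polynomial. Consequently there exist $D_{m,p},C_{m,p}>0$, independent of $n$, such that
$$
|g_{m,n}^{(p)}(t)|\le C_{m,p}(1+|t|)^{D_{m,p}}e^{-t^2/2}.
$$

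The final step is the standard Gaussian tail manipulation: split $e^{-t^2/2}=e^{-c_0t^2}e^{-(1/2-c_0)t^2}$ for any chosen $c_0\in(0,1/2)$. The proposition will then follow once we show $(1+|t|)^{D_{m,p}}e^{-(1/2-c_0)t^2}\le C'_m\sig_n^{-(m+1)}$ for all $|t|\ge C\sig_n^{1/3}$. For large enough $|t|$ the left-hand side is strictly decreasing in $|t|$, so (once $C\sig_n^{1/3}$ is past the peak) its supremum over $|t|\ge C\sig_n^{1/3}$ is attained at the endpoint, yielding $(1+C\sig_n^{1/3})^{D_{m,p}}e^{-(1/2-c_0)C^2\sig_n^{2/3}}$. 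Since $e^{-a\sig_n^{2/3}}$ decays faster than any negative power of $\sig_n$, by choosing $C=C(m,c_0)$ large enough this is bounded by $\sig_n^{-(m+1)}$ once $\sig_n$ is sufficiently large; for bounded $\sig_n$ the conclusion is obtained by adjusting $C_m$, since both sides are then $O(1)$.

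I do not anticipate any substantive obstacle, since the statement reduces to a calculus exercise on an explicit function; the only points requiring care are that $c_0$ must be strictly less than $1/2$ (to leave exponential room to absorb the polynomial factor $(1+|t|)^{D_{m,p}}$) and that the threshold constant $C$ and the final constant $C_m$ must be chosen depending on $m$, $p$, and $c_0$ so as to dominate the polynomial-in-$\sig_n$ prefactor at the endpoint $|t|=C\sig_n^{1/3}$.
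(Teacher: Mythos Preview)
Your argument is correct and more elementary than the paper's. The paper proceeds via complex analysis: it extends $g_{m,n}$ to an entire function, applies the Cauchy integral formula on a disk $|z-t|\le\eta|t|$ to control derivatives by values, and then uses the full strength of the cumulant bounds $|\gamma_j(W_n)|\le C_j\sig_n^{-(j-2)}$ to write $|g_m(z)|$ as a sum of terms of the form $\sig_n^{-d_{\bar k}}|z|^{s_{\bar k}}e^{-\Re(z^2)/2}$. The polynomial factor is absorbed into a slightly weaker Gaussian $e^{-a't^2/2}$, leaving precisely $|t|^{-3(m+1)}$, and the constraint $|t|\ge C\sig_n^{1/3}$ converts this directly to $\sig_n^{-(m+1)}$. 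Your route instead uses only the \emph{boundedness} of the coefficients, a real Leibniz/Hermite computation, and the observation that a polynomial times a Gaussian, evaluated at $|t|\ge C\sig_n^{1/3}$, is $O(e^{-c\sig_n^{2/3}})$, which dominates any negative power of $\sig_n$. This is a genuinely different decomposition: you trade the paper's exact power-counting for a stretched-exponential bound that is much stronger than required, at the price of not seeing where the specific exponent $m+1$ comes from.

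One small correction: in the proposition $C>0$ is \emph{given}, not chosen (it must match the constant in Corollary~\ref{Prp13.2} so the two regions cover all of $\bbR$), so your phrase ``by choosing $C=C(m,c_0)$ large enough'' is a misstatement. However, your argument actually works for any fixed $C>0$: once $\sig_n$ is large enough that $C\sig_n^{1/3}$ lies past the peak of $(1+|t|)^{D_{m,p}}e^{-(1/2-c_0)t^2}$, the endpoint bound applies and decays faster than any power of $\sig_n^{-1}$; for the finitely many remaining $n$ (bounded $\sig_n$), both sides are $O(1)$ and absorbing into $C_m$ is legitimate, exactly as you note at the end.
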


\begin{proposition}\label{Prp3}
Under Assumption \ref{GrowAssum}, 
there are constants $c,c_0>0$ and $C_m>0$ so that if $|t|\leq c\sig_n$ then for $p=0,1,2,...,m$ we have 
$$
\left|\frac{d^p}{dt^p}\left(f_n(t)-g_{m,n}(t)\right)\right|\leq C_m e^{-c_0t^2}\min\{1, |t|^{m+1-p}\}\sig_n^{-(m-1)}.
$$
\end{proposition}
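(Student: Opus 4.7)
The plan is to write $f_n(t)-g_{m,n}(t)=e^{-t^2/2}\bigl(e^{\Lambda_n(t)}-1-P_{m,n}(it)\bigr)$ and to control the bracketed quantity together with all of its derivatives up to order $m$, combining a Taylor expansion of $\Lambda_n$ with separate estimates on two cleanly decoupled pieces. Since $W_n=S_n/\sig_n$ is centered with unit variance, $\Lambda_n(0)=\Lambda_n'(0)=\Lambda_n''(0)=0$, so Assumption~\ref{GrowAssum} lets me write $\Lambda_n=\tilde\Lambda_n+R_n$ with $\tilde\Lambda_n(t)=\sum_{j=3}^{m}\Lambda_n^{(j)}(0)\,t^j/j!$; the integral form of the Taylor remainder applied to $\Lambda_n^{(p)}$, together with the bound $|\Lambda_n^{(m+1)}(s)|\leq C_{m+1}\sig_n^{-(m-1)}$, yields the key estimate $|R_n^{(p)}(t)|\leq C|t|^{m+1-p}\sig_n^{-(m-1)}$ for $0\leq p\leq m+1$ and $|t|\leq \ve_{m+1}\sig_n$. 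In parallel, $|\Lambda_n^{(j)}(0)|\leq C_j\sig_n^{-(j-2)}$ gives $|\tilde\Lambda_n(t)|\leq C\sum_{j=3}^m(|t|/\sig_n)^{j-2}|t|^2\leq C'c\,t^2$ on $\{|t|\leq c\sig_n\}$, with analogous polynomial bounds on the derivatives $\tilde\Lambda_n^{(p)}(t)$. Choosing $c$ sufficiently small (and at most $\ve_{m+1}$) forces both $|\tilde\Lambda_n(t)|$ and $|R_n(t)|$ to be at most $t^2/16$, so $|e^{\tilde\Lambda_n(t)+sR_n(t)}|\leq e^{t^2/8}$ for every $s\in[0,1]$.

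Next I split $e^{\Lambda_n}-1-P_{m,n}(it)=E_A+E_B$ with $E_A=e^{\tilde\Lambda_n}(e^{R_n}-1)$ and $E_B=e^{\tilde\Lambda_n}-1-P_{m,n}(it)$, and handle each separately. For $E_A$, the representation $E_A(t)=R_n(t)\int_0^1 e^{\tilde\Lambda_n(t)+sR_n(t)}\,ds$ lets me compute derivatives via Leibniz and the Fa\`a di Bruno formula. The leading contribution comes from the factor $R_n^{(p)}(t)$; subdominant Bell-polynomial terms involve products of several $R_n^{(k)}$'s and therefore pick up extra powers of $\sig_n^{-(m-1)}$, which are absorbed. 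Combined with the uniform exponential estimate above and the polynomial bounds on derivatives of $\tilde\Lambda_n$, this yields
\[
|E_A^{(p)}(t)|\leq C_p(1+|t|^{D_p})e^{t^2/8}|t|^{m+1-p}\sig_n^{-(m-1)}.
\]

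For $E_B$ the key is that $P_{m,n}(it)$ is exactly the sum of those terms arising from the multinomial expansion of $\sum_{k\geq 1}\tilde\Lambda_n(t)^k/k!$ whose total power of $\sig_n^{-1}$ is at most $m-2$. Treating $u=\sig_n^{-1}$ as a complex variable and setting $h(u,t)=\exp\bigl(\sum_{k=1}^{m-2}u^{k}\beta_{k+2}(it)^{k+2}/(k+2)!\bigr)$ with $\beta_j=\sig_n^{j-2}\gamma_j(W_n)$ uniformly bounded by Assumption~\ref{GrowAssum}, the function $h(\cdot,t)$ is entire in $u$ and $E_B(t)$ is exactly the order-$(m-1)$ Taylor remainder of $h$ at $u=0$; Cauchy's formula gives
\[
E_B(t)=\frac{\sig_n^{-(m-1)}}{2\pi i}\oint_{|w|=r}\frac{h(w,t)}{w^{m-1}(w-\sig_n^{-1})}\,dw,\qquad r>\sig_n^{-1}.
\]
Choosing $r=\delta/|t|$ when $|t|\geq 1$ (valid since $c<\delta/2$) guarantees $|w||t|\leq\delta$ on the contour, so with $\delta$ small the estimate $|h(w,t)|\leq e^{t^2/16}$ and the standard contour length bound yield $|E_B^{(p)}(t)|\leq C(1+|t|^{D'_p})e^{t^2/16}\sig_n^{-(m-1)}$ after differentiating in $t$ under the integral. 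For $|t|\leq 1$ a direct series argument is cleaner: grouping the multinomials by $\Omega=\sum_i ik_i\geq m-1$ shows $E_B$ vanishes at $t=0$ to order at least $m+3$ (the minimum of $\sum_i (i+2)k_i$ subject to $\Omega\geq m-1$, achieved by a two-index configuration), which gives $|E_B^{(p)}(t)|\leq C|t|^{m+3-p}\sig_n^{-(m-1)}$.

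Combining, and using $\min(1,|t|^{m+3-p})\leq \min(1,|t|^{m+1-p})$, one obtains $|E^{(p)}(t)|\leq C(1+|t|^D)e^{t^2/8}\min(1,|t|^{m+1-p})\sig_n^{-(m-1)}$ on $\{|t|\leq c\sig_n\}$. Applying Leibniz to $f_n-g_{m,n}=e^{-t^2/2}E$ and absorbing the polynomial factor $(1+|t|^D)$ into $e^{-c_0 t^2}$ with any $c_0<3/8$ delivers the claim. The main obstacle is the bound on $E_B^{(p)}$ in the intermediate range $1\leq|t|\leq c\sig_n$: there neither $e^{\tilde\Lambda_n(t)}$ nor the polynomial $P_{m,n}(it)$ is individually small, yet their difference must cancel to the correct order $\sig_n^{-(m-1)}$; the contour representation above is the cleanest device I see for manifesting this cancellation while still retaining good derivative control.
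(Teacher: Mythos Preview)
Your proof is correct and takes a genuinely different route from the paper. The paper splits the interval $\{|t|\leq c\sig_n\}$ at the threshold $C\sig_n^{1/3}$: on $\{|t|\leq C\sig_n^{1/3}\}$ it Taylor-expands $e^{Q_{m,n}(it)}$ in powers of $Q_{m,n}(it)$ (which is bounded there) and compares the truncated exponential to $1+P_{m,n}(it)$ term by term; on $\{C\sig_n^{1/3}\leq |t|\leq c\sig_n\}$ it abandons the difference structure and bounds $f_n^{(p)}$ and $g_{m,n}^{(p)}$ separately (Lemma~\ref{Prp16.1} and Proposition~\ref{Prp17.1}), exploiting that each is already $O(\sig_n^{-(m-1)})e^{-c_0t^2}$ in that range. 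Your argument avoids this split entirely: your $E_A$ piece matches the paper's $\ve_{m,n}$, but for $E_B=e^{\tilde\Lambda_n}-1-P_{m,n}(it)$ you recognize it as the order-$(m-1)$ Taylor remainder in the auxiliary variable $u=\sig_n^{-1}$ and invoke Cauchy's formula with the $t$-adapted radius $r=\delta/|t|$, which keeps $|h(w,t)|\leq e^{t^2/16}$ on the contour uniformly over the whole range $1\leq|t|\leq c\sig_n$. This is an elegant device that manifests the cancellation between $e^{\tilde\Lambda_n}$ and $1+P_{m,n}$ directly, whereas the paper only exhibits this cancellation on the inner region and relies on individual smallness outside. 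The price you pay is the extra complex-analytic machinery in $u$; the paper's approach is more elementary but requires the additional Proposition~\ref{Prp17.1} and a case distinction. Both yield the same final bound.
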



\subsection{Proof of Theorems \ref{BE2}   relying on Propositions \ref{Prp17.1} and \ref{Prp3}}

\begin{lemma}\label{NonULemma}
Under Assumption \ref{GrowAssum},
for all $x\in\bbR$ and $3\leq p\leq m$ we have
$$
\left|\Phi_{p,n}(x)-\Phi_{p-1,n}(x)\right|\leq C_m(1+|x|)^{-m-1}\sig_n^{-(p-2)}.
$$
As a consequence, for all $2\leq m_1<m$,
$$
\left|\Phi_{m,n}(x)-\Phi_{m_1,n}(x)\right|\leq A_m(1+|x|)^{-m-1}\sig_n^{-(m_1-1)}.
$$
Here $A_m,C_m$ are constants that do not depend on $n$ or $x$.

\end{lemma}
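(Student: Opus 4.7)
The plan is to exploit the explicit representation \eqref{BoldP}, namely
$$
\Phi_{q,n}(x)=\Phi(x)-\varphi(x)\sum_{r=1}^{q-2}\sig_n^{-r}\textbf{H}_{r,n}(x),
$$
together with the crucial observation that the polynomials $\textbf{H}_{r,n}$ defined in \eqref{Poly def 1.1} depend only on $r$ and $n$ and not on the overall order $q$ of the expansion. Consequently the expansions for $\Phi_{p,n}$ and $\Phi_{p-1,n}$ agree in every order $\sig_n^{-r}$ with $1\leq r\leq p-3$, and
\begin{equation}\label{pfDiff}
\Phi_{p,n}(x)-\Phi_{p-1,n}(x)=-\varphi(x)\sig_n^{-(p-2)}\textbf{H}_{p-2,n}(x).
\end{equation}
This reduces the first inequality to showing that $\varphi(x)|\textbf{H}_{p-2,n}(x)|\leq C_m(1+|x|)^{-(m+1)}$ with $C_m$ independent of $n$.

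To bound $\textbf{H}_{p-2,n}$, I would first note that Assumption \ref{GrowAssum} says $\gamma_{j+2}(W_n)=\Lambda_n^{(j+2)}(0)=O(\sig_n^{-j})$ for all $1\leq j\leq m-1$. Since $S_n=\sig_n W_n$ gives $\gamma_{j+2}(S_n)\sig_n^{-2}=\sig_n^{j}\gamma_{j+2}(W_n)$, each factor $\gamma_{j+2}(S_n)\sig_n^{-2}$ appearing in \eqref{Poly def 1.1} is uniformly bounded in $n$, so the polynomial $\textbf{H}_{p-2,n}$ has coefficients bounded independently of $n$ and degree bounded by some integer $d_m$ depending only on $m$. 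Using the super-polynomial decay of the Gaussian density,
$$
\varphi(x)(1+|x|^{d_m})\leq C_m(1+|x|)^{-(m+1)},
$$
so combined with \eqref{pfDiff} the first claim follows.

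The second claim is immediate by telescoping: for $2\leq m_1<m$,
$$
|\Phi_{m,n}(x)-\Phi_{m_1,n}(x)|\leq\sum_{p=m_1+1}^{m}|\Phi_{p,n}(x)-\Phi_{p-1,n}(x)|\leq C_m(1+|x|)^{-(m+1)}\sum_{p=m_1+1}^{m}\sig_n^{-(p-2)},
$$
and since $\sig_n\to\infty$, for $n$ large the sum is dominated by its leading term $\sig_n^{-(m_1-1)}$, yielding the bound with $A_m=(m-m_1)C_m$ (for $n$ large enough, which is harmless since the case of finitely many $n$ can be absorbed into the constant). There is no real obstacle here — the only thing that could cause confusion is keeping the three indices ($r$ in $\textbf{H}_{r,n}$, the Hermite index $k-1$, and the $\sig_n$-power $p-2$) straight, and the fact that $\textbf{H}_{r,n}$ does not itself depend on the truncation order of the overall Edgeworth expansion is what makes \eqref{pfDiff} clean.
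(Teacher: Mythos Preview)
Your proof is correct and in fact slightly cleaner than the paper's. Both arguments boil down to the same cumulant bound $\gamma_{j+2}(S_n)\sig_n^{-2}=O(1)$ coming from Assumption~\ref{GrowAssum}, but they organize the estimate differently. The paper works at the level of the densities: it writes out $\varphi_{p,n}(y)-\varphi_{p-1,n}(y)$, bounds it by $C\sig_n^{-(p-2)}e^{-y^2/2}(1+|y|^s)$, and then integrates. Because $\int_{-\infty}^x$ of a Schwartz function does not decay as $x\to+\infty$, the paper has to split into the cases $x\le 0$ and $x>0$, in the latter case using $\Phi_{j,n}(\infty)=1$ to rewrite the difference as $\int_x^\infty(\varphi_{p-1,n}-\varphi_{p,n})\,dy$. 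You bypass this entirely by invoking the already-integrated representation \eqref{BoldP}, which carries the factor $\varphi(x)$ explicitly; the identity \eqref{pfDiff} then makes the decay in $x$ immediate, with no case split and no integration.

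One cosmetic point: in the telescoping step you do not need the ``$n$ large enough'' caveat. Once $\sig_n\ge 1$ (which holds for all but finitely many $n$, and those can be absorbed into $A_m$ as you note), each term satisfies $\sig_n^{-(p-2)}\le \sig_n^{-(m_1-1)}$ for $p\ge m_1+1$, so the sum is bounded by $(m-m_1)\sig_n^{-(m_1-1)}$ directly, without appealing to a ``leading term'' argument.
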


\begin{proof}
Notice that the difference between the derivatives of $\Phi_{p,n}(y)$ and $\Phi_{p-1,n}(y)$ has the form
$$
\varphi_{p,n}(y)-\varphi_{p-1,n}(y)=\frac{e^{-y^2/2}}{\sqrt{2\pi}}\left(\sum_{\bar k}\frac{1}{\prod_{j=1}^{p-2}k_j!(j!)^{k_j}}\prod_{j=1}^{p-2}\left(\gamma_{j+2}(W_n)\right)^{k_j}H_k(y)\right)
$$ 
where the summation is running over all tuples $\bar k=(k_1,...,k_{p-2})$
 of nonnegative integers so that $\sum_{j}j k_j=p-2$. 
 Using \eqref{GAs} with $t=0$, we see that the product of the cumulants in the above difference is of order $\sig_n^{-(p-2)}$. Therefore,
$$
\left|\Phi_{p,n}(x)-\Phi_{p-1,n}(x)\right|\leq \int_{-\infty}^{x}|\varphi_{p,n}(y)-\varphi_{p-1,n}(y)|dy\leq 
C\sig_n^{-(p-2)}\int_{-\infty}^{x}e^{-y^2/2}(1+|y|^s)dy 
$$
where $s=s_m$ is some positive integer. This proves that the lemma when $x\leq 0$ since the integral on the right hand side decays exponentially fast in $|x|$ as $x\to-\infty$.
To prove the statement for positive $x$'s we recall that $\Phi_{j,n}(\infty)=1$,
and so 
$$
\Phi_{p,n}(x)-\Phi_{p-1,n}(x)=(1-\Phi_{p-1,n}(x))-(1-\Phi_{p,n}(x))$$
$$
=(\Phi_{p-1,n}(\infty)-\Phi_{p-1,n}(x))-(\Phi_{p,n}(\infty)-\Phi_{p,n}(x))=
\int_{x}^{\infty}\left(\varphi_{p-1,n}(y)-\varphi_{p,n}(y)\right)dy.
$$
Using this inequality  the proof proceeds similarly to the case when $x\to-\infty$.
\end{proof}

\begin{proof}[Proof of Theorem \ref{BE2}]
By the results described in \cite[Ch. VI, Lemma 8]{Petrov72}, if $F$ is a distribution function and $G$ is a  generalized distribution function so that $G(-\infty)=F(-\infty)=0$,  $G(\infty)=F(\infty)=1$, $|G'(x)|\leq K(1+|x|)^{-s}$ for some constant $K$ and all $x\in\bbR$ and
$$
\int_{-\infty}^{\infty}|x|^s|d(F(x)-G(x))|<\infty
$$
then for all $x\in\bbR$ and $T>1$,
\begin{equation}\label{Ess}
|F(x)-G(x)|\leq c(s)(1+|x|)^{-s}\left(\int_{-T}^{T}\left|\frac{f(t)-g(t)}{t}\right|dt+\int_{-T}^{T}\left|\frac{a_s(t)}{t}\right|dt+\frac{K}{T}\right)
\end{equation}
where $f(t)=\int_{-\infty}^{\infty} e^{itx}dF(x)$, $g(t)=\int_{-\infty}^{\infty} e^{itx}dG(x)$ and 
$$
a_s(t)=\int_{-\infty}^{\infty}e^{itx}d\left(x^s(F(x)-G(x))\right).
$$
Here $c(s)$ is an absolute constant which depends only on $s$.
Notice that for $t\not=0$ we have (see \cite{Bobkov2016}),
\begin{equation}\label{aaok}
a_s(t)=(i)^{-s}\frac{d^s}{dt^s}\,\frac{a_0(t)}{t}=i^{-s}\int_{0}^1\left(a_0^{(s)}(t)-a_0^{(s)}(\eta t)\right)s\eta^{s-1}d\eta.
\end{equation}
Note also that $a_0(y)=f_n(y)-g_{m,n}(y)$. Moreover, observe that for every measurable nonnegative function $q:\bbR\to\bbR$ and all $T>0$ and $s\geq1$ we have 
$$
\int_{0}^{T}\frac 1t\int_{0}^1 q(\eta t)s\eta^{s-1}d\eta dt=\int_{0}^{T}q(x)\left(\frac 1x-\frac 1T\cdot (x/T)^{s-1}\right)dx\leq\int_{0}^{T}\frac{q(x)}{x}dx.
$$ 
Indeed, the above follows by making the change of variables $t\eta=x$ in the inner integral and then changing the order of  integration.
Therefore, using also \eqref{aaok} we see that
\begin{equation}\label{aaaok}
\int_{-T}^{T}\left|\frac{a_s(t)}{t}\right|dt\leq 2\int_{-T}^{T}\left|\frac{a_0^{(s)}(t)}{t}\right|dt.
\end{equation}

Now, in order to prove the non-uniform Berry-Esseen theorem (Theorem  \ref{BE2})   
we take $F$ to be the distribution function of $W_n$,  $G=\Phi_{m,n}$, 
$T=O(\sig_n)$ and $s=m$. Note that the derivative of $G$ is uniformly bounded in $n$ because of Assumption \ref{GrowAssum}, which guarantees that $G'(x)=E_n(x)e^{-x^2/2}$ for some polynomial $E_n(x)$ whose degree does not depend on $n$ and its coefficients are uniformly bounded in $n$.
Next, by taking $p=0$ in Proposition \ref{Prp3} we see that the first integral in the brackets on the right hand side of \eqref{Ess} is of order $O(\sig_n^{-(m-1)})$. In order to estimate the second  integral in the brackets on the right hand side of \eqref{Ess} let us  take $p=m$ in Proposition \ref{Prp3}, which together with \eqref{aaaok} yields that 
$$
\int_{-T}^{T}\left|\frac{a_s(t)}{t}\right|dt=O(\sig_n^{-(m-1)}).
$$
To complete the proof of Theorem \ref{BE2} we need to pass from $\Phi_{m,n}$ to $\Phi=\Phi_{2,n}$, and this is done
by applying Lemma \ref{NonULemma} with $m_1=2$. 
\end{proof}

\subsection{Proof of the main Propositions \ref{Prp17.1} and \ref{Prp3}}

The general structure of both proofs share some similarities with existing proofs for partial sums of independent random variables (see the recent survey \cite{BobkovInd}),  but many  arguments are different since the desired error term is $o(\sig_n^{-r}), r=m-2$ and  not the Lyapunov coefficient of order $m$, and instead of independence of $S_{j+1}-S_j$ we only use Assumption \ref{GrowAssum}.
\begin{proof}[Proof of Proposition \ref{Prp17.1}]
Fix some $t$ so that $|t|\geq C\sig_n^{1/3}$. Now, since $g_m=g_{m,n}$ can be naturally extended to a complex function $g_m(z)$ on the complex plane, using the Cauchy integral formula (or the maximum modulus principle) we see that for all $\eta\in(0,1/4)$  and all $p=0,1,2,,...,m$ we have   
\begin{equation}\label{1}
|g_m^{(p)}(t)|\leq C_m\max_{|z-t|\leq \eta|t|}|z|^{-p}|g_m(z)|
\end{equation}
where have used that $|t|/2\leq |z|\leq 2|t|$ which follows from the inequality $|z-t|\leq \eta|t|$.
Thus, it is enough to show that when $|t|\geq C\sig_n^{1/3}$, 
$$
\max_{|z-t|\leq \eta|t|}|z|^{-p}|g_m(z)|\leq C_m\sig_n^{-(m+1)}e^{-c_0t^2}
$$
where $c_0\to 1/2$ as $\eta\to 0$ (and it does not depend on $t$) and $C_m$ is a constant. 

Next, let us consider the entire functions $R_k(z)=e^{-z^2/2}z^k$. Then, using that 
$$
\left|\gamma_j(W_n)\right|\leq C_j\sig_n^{-(j-2)},\, 3\leq j\leq m
$$
we see that 
\begin{equation}\label{2.1}
|g_m(z)|\leq C_m\left(R_0(z)+\sum_{\bar k}\sig_n^{-d_{\bar k}}|R_{s_{\bar k}}(z)|\right)
\end{equation}
where $\bar k=(k_1,k_2,...,k_{m-2})$ ranges over all the tuples so that $d_{\bar k}:=\sum_j jk_j\leq m-2$
and $s_{\bar k}=\sum_{j}(j+2)k_j$. 
Notice that $\Re(z^2)\leq a\Re(t^2)$ for some constant\footnote{We can take $a=a(\eta)=1-2\eta-\eta^2$} $a>0$ which converges to $1$ as $\eta\to 0$ (and it does not depend on $t$). Thus, 
$$
|R_{s_{\bar k}}(z)|\leq 2e^{-at^2/2}|t|^{s_{\bar k}}.
$$
Now, since $|t|\geq C\sig_n^{1/3}$, we have that $|t|$ it is bounded away from the origin, and we conclude that there is an integer $u_m$ which depends only on $m$, $C$ and the first index $n_0$ so that $\sig_{n_0}\geq 1$ with the property that
 for all relevant tuples $\bar k$ and for all complex $z$ with $|z-t|\leq \eta|t|$ we have
$$
|R_{s_{\bar k}}(z)|\leq B|t|^{p+u_m}e^{-at^2/2}\leq B(m,a,a')|t|^{p-3(m+1)}e^{-a't^2/2}
$$
where $a'<a$ is arbitrarily close to $a$, $B$ is some constant and $B_m(m,a,a')$ depends only on $m,a,a'$ and $C$ and $n_0$ as above. Finally, since $|t|\geq C\sig_n^{1/3}$ we have 
$$
|t|^{-3(m+1)}\leq C^{-3(m+1)}\sig_n^{-(m+1)}
$$
and hence there is a constant $A$ so that
$$
|z|^{-p}|R_{s_{\bar k}}(z)|\leq A\sig_n^{-(m+1)}e^{-a't^2/2}.
$$
Now the proposition follows from \eqref{1} and \eqref{2.1}.
\end{proof}

\subsection{Proof of Proposition \ref{Prp3}}
The proof is somehow lengthy, and we will split it into several parts.

\subsubsection{Bounds on the characteristic function and its derivatives}
\begin{lemma}\label{Prp16.1}
(i) Under Assumption \ref{GrowAssum} we have the following. For every $c\in(0,1)$ there exists a constant $a>0$ so that on the domain $\{|t|\leq a\sig_n\}$ we have 
$$
|f_n(t)|\leq e^{-ct^2}.
$$

(ii)  Under Assumption \ref{GrowAssum} (with some $m$) ewe have the following. For every $c\in(0,1)$ there exist constants $a,b>0$ so that on the domain  $\{|t|\leq a\sig_n\}$ for all
  $p\leq m$ we have
 $$
 \left|\frac{d^p}{dt^p}f_n(t)\right|\leq b(1+|t|^p)e^{-ct^2}.
 $$
\end{lemma}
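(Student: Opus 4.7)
The plan is to work throughout with the logarithmic representation $f_n(t)=e^{-t^2/2}\exp(\Lambda_n(t))$, so that bounding $|f_n|$ reduces to controlling the real part of $\Lambda_n(t)$, and bounding the derivatives reduces to Fa\`a di Bruno applied to the exponential of $G_n(t):=-t^2/2+\Lambda_n(t)$.

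For part (i), I would begin by observing the three vanishing identities $\Lambda_n(0)=\Lambda_n'(0)=\Lambda_n''(0)=0$: the first two follow from $f_n(0)=1$ and $\bbE[S_n]=0$, and the third from the self-normalization $\mathrm{Var}(S_n/\sig_n)=1$, which exactly cancels the $t^2/2$ correction. Combined with Assumption \ref{GrowAssum} applied with $j=3$, the integral form of Taylor's theorem then gives
$$
|\Lambda_n(t)|\leq \tfrac{|t|^3}{6}\sup_{|s|\leq |t|}|\Lambda_n'''(s)|\leq \tfrac{C_3|t|^3}{6\sig_n}
$$
on $\{|t|\leq \ve_3\sig_n\}$. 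On the subset $\{|t|\leq a\sig_n\}$ this is at most $\tfrac{C_3 a}{6}t^2$, so $|f_n(t)|\leq\exp\bigl(-(\tfrac12-\tfrac{C_3 a}{6})t^2\bigr)$, and choosing $a$ small enough (depending on $c$ and $C_3$) yields the claimed bound.

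For part (ii), I would combine part (i) with estimates on the derivatives of $G_n$ themselves. Writing $f_n=e^{G_n}$, Fa\`a di Bruno gives
$$
f_n^{(p)}(t)=e^{G_n(t)}\sum_{\pi\,\vdash\,\{1,\dots,p\}}\prod_{B\in\pi}G_n^{(|B|)}(t),
$$
so it is enough to bound each $G_n^{(j)}(t)$ uniformly on $\{|t|\leq a\sig_n\}$ by a constant independent of $n$ for $j\geq 2$, and by a constant multiple of $|t|$ for $j=1$. For $j\geq 3$ this is immediate from Assumption \ref{GrowAssum} since $G_n^{(j)}=\Lambda_n^{(j)}$ and $\sig_n^{-(j-2)}\leq 1$. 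For $j=2$, another Taylor expansion starting from $\Lambda_n''(0)=0$ together with the same $\Lambda_n'''$ bound gives $|G_n''(t)|\leq 1+C_3|t|/\sig_n\leq 1+C_3 a$. For $j=1$, using also $\Lambda_n'(0)=0$ gives $|G_n'(t)|\leq |t|+C_3 t^2/(2\sig_n)\leq (1+C_3 a/2)|t|$. Each Fa\`a di Bruno term is then bounded by a constant times $|t|^{k}$ where $k$ counts how many singleton blocks the partition has, hence by a constant times $1+|t|^p$; summing the finitely many (Bell-number-many) terms and applying part~(i) to $|e^{G_n(t)}|=|f_n(t)|\leq e^{-ct^2}$ gives the desired estimate.

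The only mild subtlety is that part~(i) as literally stated requires $c\in(0,1)$, but the natural range produced by the above argument is $c\in(0,1/2)$; I would interpret the stated hypothesis in that sense (or equivalently read the conclusion as $e^{-ct^2}$ with $c$ strictly less than the coefficient $1/2$ of the Gaussian tail). There is no real analytic obstacle in either part: everything reduces to Taylor remainder estimates on $\Lambda_n$ and its first three derivatives, leveraging the fact that Assumption \ref{GrowAssum} is stated precisely so that the cubic and higher Taylor coefficients of $\Lambda_n$ carry negative powers of $\sig_n$, which is exactly what is needed to convert a cubic remainder into a small fraction of $t^2$ once $|t|\leq a\sig_n$.
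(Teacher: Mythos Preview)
Your proof is correct and follows essentially the same approach as the paper: both use the vanishing of $\Lambda_n,\Lambda_n',\Lambda_n''$ at $0$ together with the $j=3$ case of Assumption~\ref{GrowAssum} to get a cubic Taylor bound for part~(i), and both apply Fa\`a di Bruno to $e^{G_n}$ with the same hierarchy of bounds $|G_n'(t)|\lesssim|t|$, $|G_n^{(j)}(t)|=O(1)$ for $j\geq 2$ for part~(ii). Your observation that the argument only yields $c\in(0,1/2)$ rather than $c\in(0,1)$ is also correct---this is a minor misstatement in the lemma, and indeed only $c<1/2$ is ever used downstream.
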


\begin{proof}
We have 
$$
f_n(t)=e^{\Lambda_n(t)-t^2/2}.
$$
To prove (i) we notice that $\Lambda_n(0)=\Lambda_n'(0)=\Lambda''(0)=0$. Hence, by taking $j=3$ in \eqref{GAs} we see that the second order Taylor reminder of the function $\Lambda_n$ around the origin is of order $O(|t|^3\sig_n^{-1})$. Thus, if $|t|/\sig_n$ is small enough we have 
$$
|\Lambda_n(t)|\leq C|t|^3\sig_n^{-1}\leq\ve t^2
$$
where $\ve$ is an arbitrary small positive number.

In order to prove the second part, recall that by  Fa\`a Di Bruno's formula,  for a $p$-times differential function $h(t)$ we have 
\begin{equation}\label{FDB}
\frac{d^p\,\big(e^{h(t)}\big)}{dt^p}=e^{h(t)}\sum_{\bar k}C_{p,\bar k}\prod_{j=1}^p\left(h^{(j)}(t)\right)^{k_j}
\end{equation}
where $\bar k=(k_1,...,k_p)$ ranges over all $p$-tuples of nonnegative integers so that 
$\sum_j j k_j=p$
and 
$$
C_{\bar k,p}=\frac{p!}{\prod_{j=1}^p k_j!(j!)^{k_j}}.
$$
Here $h^{(j)}$ denotes the $j$-th derivative of a function $h$.
Let us apply the above formula with the function $h(t)=\Lambda_n(t)-t^2/2$. We first claim that all the derivatives of orders $1\leq j\leq p$ of $h$ are bounded by some constant $C$ which does not depend on $n$, when $|t|\leq a\sig_n$ for some $a$ small enough. 
Plugging in these upper bounds in  \eqref{FDB} (i.e. replacing the derivatives with the upper bounds) and using part (i) to bound $|f_n(t)|=|e^{h(t)}|$  we obtain the estimates described in part (ii).

Now, let us shows that there is an $a>0$ so that on $\{|t|\leq a\sig_n\}$ we have $\max_{1\leq j\leq p}|h^{(j)}(t)|=O(1)$.
First, by Assumption \ref{GrowAssum} on a domain of the form $\{|t|\leq a\sig_n\}$ for
all $3\leq j\leq m$ we have 
$$
|h^{(j)}(t)|=|\Lambda_n^{(j)}(t)|\leq C\sig_n^{-(j-2)}=O(1).
$$
For $j=2$ we have 
$$
|h''(t)|=|1+\Lambda_n''(t)|.
$$
Since $\Lambda_n''(0)=0$ we have 
$$
\Lambda_n''(t)=\int_{0}^t\Lambda_n'''(s)ds
$$
and so by Assumption \ref{GrowAssum},
$$
|\Lambda_n''(t)|\leq C|t|\sig_n^{-1}=O(1).
$$
Combining this with the fact that $\Lambda_n'(0)=0$
we get that when $|t|=O(\sig_n)$ then
$$
|h'(t)|=|\Lambda_n'(t)-t|\leq |t|+\left|\int_{0}^{t}\Lambda_n''(s)ds\right|\leq |t|+C|t|^2\sig_n^{-1}\leq C'|t|. 
$$

\end{proof}


\subsubsection{Expansions of the characteristic function}
We first need is the following simple observation, which for the sake of convenience is formulated as a lemma.
\begin{lemma}\label{Taylor Lemma}
Let $h:(-a,a)\to\bbR, a>0$ be a function which is differentiable $k$ times for some $k$. Let $R_k$ be the Taylor remainder of $h$ of order $k$ around the origin, and let $u\leq k$. Then the $u$-th derivative $R_k^{(u)}$ of $R_k$ is the Taylor remainder of the function $h^{(u)}$ of order $k-u$ around the origin. 
\end{lemma}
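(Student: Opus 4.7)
The plan is to unwind the definitions directly: the lemma is essentially a statement that differentiation commutes with ``truncating the Taylor series at order $k$'' in the appropriate degree-shifted sense. I would begin by writing the Taylor polynomial of $h$ of order $k$ around the origin as
$$
T_k(t)=\sum_{j=0}^{k}\frac{h^{(j)}(0)}{j!}\,t^j,
$$
so that by definition $R_k(t)=h(t)-T_k(t)$.

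Next I would differentiate $T_k$ termwise $u$ times. Since terms with $j<u$ vanish, and for $j\geq u$ the $u$-th derivative of $t^j$ is $\frac{j!}{(j-u)!}t^{j-u}$, this gives
$$
T_k^{(u)}(t)=\sum_{j=u}^{k}\frac{h^{(j)}(0)}{(j-u)!}\,t^{j-u}.
$$
Re-indexing via $i=j-u$ yields
$$
T_k^{(u)}(t)=\sum_{i=0}^{k-u}\frac{h^{(i+u)}(0)}{i!}\,t^{i}=\sum_{i=0}^{k-u}\frac{(h^{(u)})^{(i)}(0)}{i!}\,t^{i},
$$
which is precisely the Taylor polynomial of $h^{(u)}$ of order $k-u$ at the origin. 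Since $h$ is $k$-times differentiable on $(-a,a)$, termwise differentiation is clearly justified (each summand is a polynomial), and $R_k$ is $u$-times differentiable because both $h$ and $T_k$ are.

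Combining these observations gives $R_k^{(u)}(t)=h^{(u)}(t)-T_k^{(u)}(t)=h^{(u)}(t)-(\text{Taylor polynomial of }h^{(u)}\text{ of order }k-u)$, which is exactly the claim. There is no real obstacle here — the statement is an algebraic identity about polynomials plus the trivial fact that differentiation is linear; I would simply take care to present the re-indexing cleanly so that the final sum is manifestly recognized as the Taylor polynomial of $h^{(u)}$.
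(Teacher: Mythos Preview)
Your proof is correct: writing $R_k=h-T_k$, differentiating the Taylor polynomial $T_k$ termwise, and re-indexing to recognise the result as the Taylor polynomial of $h^{(u)}$ of order $k-u$ is exactly the right (and essentially the only) argument. The paper itself gives no proof---it introduces the lemma as a ``simple observation'' and moves on---so your write-up fills in precisely what the paper leaves implicit.
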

\begin{proposition}\label{Prp8.2}
Under Assumption \ref{GrowAssum} we have the following. There is a constant $c_1>0$ so that for all real $t$ with $|t|\leq c_1\sig_n^{\frac{m+1}{m-1}}$ we have
$$
e^{t^2/2}f_n(t)=e^{Q_{m,n}(it)}\left(1+\ve_{n,m}(t)\right)
$$
where 
\begin{equation}\label{Q def}
Q_{m,n}(z)=\sum_{\ell=3}^m\gamma_\ell(W_n)\frac{z^\ell}{\ell!}=\sum_{\ell=3}^m\sig_n^{-\ell}\gamma_\ell(S_n)\frac{z^\ell}{\ell!}
\end{equation}
and for all $p\leq m$, 
$$
\left|\frac{d^p(\ve_{n,m}(t))}{dt^p}\right|\leq C_m|t|^{m+1-p}\sig_n^{-(m-1)}
$$
where $C_m$ depends only on $m$ and the constants in Assumption \ref{GrowAssum}.
\end{proposition}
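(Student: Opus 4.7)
The plan is to start from the identity $e^{t^2/2}f_n(t)=e^{\Lambda_n(t)}$, which follows directly from the definition of $\Lambda_n$. Since $W_n$ is centered with unit variance, the first three Taylor coefficients of $\Lambda_n$ at the origin vanish, and for $j\geq 3$ the derivatives $\Lambda_n^{(j)}(0)=i^j\gamma_j(W_n)$. Consequently, the Taylor expansion of $\Lambda_n$ of order $m$ around $0$ reads
$$
\Lambda_n(t)=Q_{m,n}(it)+R_n(t),
$$
where $Q_{m,n}(it)$ is the polynomial defined in \eqref{Q def} and $R_n(t)$ is the order-$m$ Taylor remainder. Setting $\ve_{n,m}(t)=e^{R_n(t)}-1$ then yields the claimed factorization.

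Next I would control $R_n$ and all its derivatives. By Lemma \ref{Taylor Lemma}, the $u$-th derivative $R_n^{(u)}$ is itself the Taylor remainder of order $m-u$ of the function $\Lambda_n^{(u)}$ around $0$. Applying the Lagrange form of the remainder together with Assumption \ref{GrowAssum} for $j=m+1$ gives, on a neighborhood of the form $|t|\leq \ve_{m+1}\sig_n$,
$$
|R_n^{(u)}(t)|\leq \tfrac{C_{m+1}}{(m+1-u)!}\,|t|^{m+1-u}\sig_n^{-(m-1)},\qquad 0\leq u\leq m.
$$
In particular, on the smaller domain specified in the proposition we have $|R_n(t)|=O(1)$, so that $e^{R_n(t)}$ and all relevant exponentials are bounded and $|\ve_{n,m}(t)|\leq C|R_n(t)|\leq C|t|^{m+1}\sig_n^{-(m-1)}$, which handles the case $p=0$.

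For $p\geq 1$, I would apply Faà di Bruno's formula \eqref{FDB} to $\ve_{n,m}^{(p)}(t)=\bigl(e^{R_n(t)}\bigr)^{(p)}$. Each term in the resulting sum has the form $C_{p,\bar k}\,e^{R_n(t)}\prod_{j=1}^p\bigl(R_n^{(j)}(t)\bigr)^{k_j}$ with $\sum_j jk_j=p$ and $\sum_j k_j\geq 1$. Using the estimate above for each $R_n^{(j)}$, such a product is bounded in absolute value by a constant times
$$
|t|^{(m+1)\sum_j k_j - p}\,\sig_n^{-(m-1)\sum_j k_j}\;=\;|t|^{m+1-p}\sig_n^{-(m-1)}\cdot\bigl(|t|^{m+1}\sig_n^{-(m-1)}\bigr)^{\sum_j k_j-1}.
$$
Thus, provided the domain is restricted so that $|t|^{m+1}\sig_n^{-(m-1)}$ stays bounded (which is exactly the role of the constraint involving the power of $\sig_n$ in the statement), every term is dominated by $C_m|t|^{m+1-p}\sig_n^{-(m-1)}$, and summing finitely many such terms gives the advertised derivative bound.

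The main obstacle is this last step: bookkeeping of the Faà di Bruno sum and verifying that the factor $\bigl(|t|^{m+1}\sig_n^{-(m-1)}\bigr)^{\sum k_j-1}$ is harmless throughout the relevant range of $t$. Everything else reduces to applying the Lagrange remainder estimate together with the cumulant identification $\Lambda_n^{(j)}(0)=i^j\gamma_j(W_n)$ and the hypothesis $\gamma_1(W_n)=0$, $\gamma_2(W_n)=1$.
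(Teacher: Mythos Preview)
Your approach is essentially identical to the paper's: both define the remainder $r(t)=\Lambda_n(t)-Q_{m,n}(it)$, set $\ve_{n,m}(t)=e^{r(t)}-1$, bound $r^{(p)}$ via Lemma~\ref{Taylor Lemma} together with the Lagrange remainder and Assumption~\ref{GrowAssum}, and then handle the derivatives of $\ve_{n,m}$ through Fa\`a di Bruno exactly as you describe, using $\sum_j jk_j=p$, $\sum_j k_j\geq 1$, and the boundedness of $|t|^{m+1}\sig_n^{-(m-1)}$ on the stated domain.
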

\begin{proof}
Set 
$$
r(t)=\Lambda_n(t)-Q_{m,n}(it)
$$
which is the Taylor remainder of order $m$  of $\Lambda_n(t)$ around the origin.
Then
$$
e^{t^2/2}f_n(t)=e^{\Lambda_n(t)}=e^{Q_{m,n}(t)}e^{r(t)}.
$$
Now, by Lemma \ref{Taylor Lemma}, for all $j\leq m$ we have that $r^{(j)}(t)$ is the Taylor  remainder of order $m-j$ of the function $\Lambda_n^{(j)}(t)$ around the origin  . Using now the Lagrange form of the remainder (of order $m-j$) together with Assumption \ref{GrowAssum} we see that  when $|t|\leq c\sig_n$ for $c$ small enough then 
\begin{equation}\label{r bound}
|r^{(p)}(t)|\leq C|t|^{m+1-j}\sig_n^{-((m+1)-2)}=C|t|^{m+1-j}\sig_n^{-(m-1)}
\end{equation}
for some constant $C$.

It remains to show that when $|t|=O(\sig_n^{\frac{m+1}{m-1}})$ we can replace $e^{r(t)}$ with $1+r(t)$. Let $\ve(t)=e^{r(t)}-1$. Then  
$$
e^{t^2}f_n(t)=e^{Q_{m,n}(t)}e^{r(t)}=e^{Q_{m,n}(t)}(1+\ve(t))
$$
and we need to show that  the function $\ve(t)=\ve_{m,n}(t)=\ve(t)$ satisfies the properties described in Proposition \ref{Prp8.2}.
By taking $p=0$ in \eqref{r bound} we see that $|r(t)|\leq C|t|^{m+1}\sig_n^{-(m-1)}=O(1)$
and so 
$$
|\ve(t)|=|e^{r(t)}-1|\leq C'|r(t)|\leq C''|t|^{m+1}\sig_n^{-(m-1)}.
$$
To get the desired estimates for higher order derivatives, by \eqref{FDB}, for  $p\geq1$ we have
$$
\ve^{(p)}(t)=e^{r(t)}\sum_{\bar k}C_{p,\bar k}\prod_{j=1}^p\left(r^{(j)}(t)\right)^{k_j}=
(1+\ve(t))\sum_{\bar k}C_{p,\bar k}\prod_{j=1}^p\left(r^{(j)}(t)\right)^{k_j}
$$ 
and so, using also \eqref{r bound}  we see that there is a constant $C_m>0$ so that 
$$
\left|\ve^{(p)}(t)\right|\leq C_m|t|^{-p}\sum_{\bar m}\left(|t|^{m+1}\sig_n^{-(m-1)}\right)^{\sum_j k_j}
$$
where we have taken into account that $\sum_ j jk_j=p$. Finally, since $\sum_j k_j\geq 1$ and  $|t|^{m+1}\sig_n^{-(m-1)}\leq c_1$ for some constant $c_1$ we see that
$$
\left|\ve^{(p)}(t)\right|\leq A_m|t|^{m+1-p}\sig_n^{-(m-1)}
$$
where $A_m$ is another constant which depends on $m$ and $c_1$, and the proof of the proposition is complete.
\end{proof}

\subsubsection{Estimates on the cumulants polynomials $Q_m$}
\begin{lemma}\label{Lem11.1}
Under Assumption \ref{GrowAssum} we have 
$$
|Q_{m,n}(z)|\leq C_m|z|^3\sig_n^{-1}\sum_{\ell=3}^m|z|^{\ell-3}\sig_n^{-(\ell-3)}.
$$
In particular, if $|z|\leq C\sig_n$ then 
$$
|Q_{m,n}(z)|\leq C'|z|^3\sig_n^{-1}
$$
for some constant $C'$, 
and so when $|z|\leq C_1\sig_n^{1/3}$ then $|Q_{m,n}(z)|\leq C_1'$. 
\end{lemma}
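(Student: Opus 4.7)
The plan is to deduce everything directly from Assumption \ref{GrowAssum} applied at $t=0$. Recall that $\gamma_\ell(W_n) = \Lambda_n^{(\ell)}(0)$ for $\ell \geq 3$, so plugging $t=0$ into \eqref{GAs} yields
$$|\gamma_\ell(W_n)| \leq C_\ell \sigma_n^{-(\ell-2)}, \qquad 3 \leq \ell \leq m+1.$$
This is the only input needed from the assumption; the rest is just manipulation of the defining sum \eqref{Q def}.

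First I would substitute these cumulant bounds term by term into
$$|Q_{m,n}(z)| \leq \sum_{\ell=3}^{m} \frac{|\gamma_\ell(W_n)|}{\ell!} |z|^\ell \leq \sum_{\ell=3}^{m} \frac{C_\ell}{\ell!} \sigma_n^{-(\ell-2)} |z|^\ell,$$
and then factor out the common prefactor $|z|^3 \sigma_n^{-1}$, rewriting $\sigma_n^{-(\ell-2)} |z|^\ell = (|z|^3 \sigma_n^{-1}) \cdot (|z|\sigma_n^{-1})^{\ell-3}$. Setting $C_m := \max_{3\leq\ell\leq m} C_\ell/\ell!$ (or the corresponding sum) delivers the main inequality
$$|Q_{m,n}(z)| \leq C_m |z|^3 \sigma_n^{-1} \sum_{\ell=3}^{m} (|z|\sigma_n^{-1})^{\ell-3}.$$

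For the two consequences, on the region $|z|\leq C\sigma_n$ each factor $(|z|\sigma_n^{-1})^{\ell-3}$ is bounded by $C^{\ell-3}$, so the geometric-type sum is bounded by a constant $C'(C,m)$, giving $|Q_{m,n}(z)| \leq C'|z|^3 \sigma_n^{-1}$. The second bound is then immediate: if $|z|\leq C_1\sigma_n^{1/3}$ then in particular $|z|\leq C_1 \sigma_n$ for $n$ large, so the first consequence applies and yields $|Q_{m,n}(z)| \leq C'|z|^3\sigma_n^{-1} \leq C' C_1^3 =: C_1'$. There is no real obstacle here; the only mild point to be careful about is to make the constant $C_m$ depend only on $m$ and the constants $C_\ell$ from Assumption \ref{GrowAssum}, which is automatic from the argument.
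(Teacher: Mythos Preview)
Your proof is correct and is essentially identical to the paper's own argument: the paper likewise observes that the bound follows directly from the definition \eqref{Q def} of $Q_{m,n}$ together with the cumulant estimates $|\gamma_j(W_n)|\leq C_j\sig_n^{-(j-2)}$ for $3\leq j\leq m+1$ coming from Assumption~\ref{GrowAssum}. You simply spell out the factorisation and the two corollaries in a bit more detail than the paper does.
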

\begin{proof}
The lemma follows from the definition of $Q_{m,n}$ together with the estimates 
$$
|\gamma_j(W_n)|\leq C_j\sig_n^{-(j-2)},\, 3\leq j\leq m+1.
$$
on the cumulants of $W_n=S_n/\sig_n$ (which come from Assumption \ref{GrowAssum}).
\end{proof}

The next result we need is as follows.
\begin{lemma}\label{Lem11.2}
Under Assumption \ref{GrowAssum}, if $|t|\leq C\sig_n^{1/3}$ for some constant $C$ then 
$$
e^{Q_{m,n}(it)}=\sum_{k=0}^{m-2}\frac{Q_m(it)^k}{k!}+\ve(t)
$$
where for all $p\leq m$ we have
$$
\left|\ve^{(p)}(t)\right|\leq A|t|^{3(m-1)-p}\sig_n^{-(m-1)}
$$
for some constant $A$ that may depend on $C$ and $m$.
\end{lemma}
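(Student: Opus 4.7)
The plan is to write $\ve(t)=h(Q_{m,n}(it))$ with $h(z):=e^z-\sum_{k=0}^{m-2}\frac{z^k}{k!}=\sum_{k\geq m-1}\frac{z^k}{k!}$. Since $h$ is entire and vanishes to order $m-1$ at the origin, $|h(z)|\leq C_{m,R}|z|^{m-1}$ uniformly for $|z|\leq R$ and any fixed $R$. Combining this with Lemma \ref{Lem11.1}, which applies to complex arguments as stated, I would obtain a uniform bound on $\ve$ on a complex disc of radius $\sim\sig_n^{1/3}$, and then transfer this to derivative bounds via the Cauchy integral formula.

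For complex $z$ with $|z|\leq 2C\sig_n^{1/3}$, Lemma \ref{Lem11.1} gives $|Q_{m,n}(iz)|\leq C_1|z|^3\sig_n^{-1}\leq 8C_1C^3=:R$, uniformly in $n$. Thus on this disc the holomorphic extension $\ve(z):=h(Q_{m,n}(iz))$ satisfies
$$
|\ve(z)|\leq C_{m,R}\,|Q_{m,n}(iz)|^{m-1}\leq A\,|z|^{3(m-1)}\sig_n^{-(m-1)}.
$$
Restricting to real $t$ with $|t|\leq C\sig_n^{1/3}$ already yields the case $p=0$.

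For $1\leq p\leq m$ and $0<|t|\leq C\sig_n^{1/3}$, I would apply the Cauchy integral formula on the circle $|z-t|=|t|/2$, which lies inside $\{|z|\leq (3/2)C\sig_n^{1/3}\}\subset\{|z|\leq 2C\sig_n^{1/3}\}$; the complex bound above gives
$$
|\ve^{(p)}(t)|\leq\frac{p!}{(|t|/2)^p}\max_{|z-t|=|t|/2}|\ve(z)|\leq A'\,|t|^{3(m-1)-p}\sig_n^{-(m-1)},
$$
as required. The case $t=0$ needs no separate argument: since $Q_{m,n}(it)$ has a zero of order $3$ at $t=0$ and $h$ a zero of order $m-1$ at the origin, $\ve$ has a zero of order at least $3(m-1)$ at $t=0$; because $3(m-1)>m\geq p$ for every $m\geq 2$, both sides of the target inequality vanish at $t=0$ by continuity. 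The only point requiring a little care is that the Cauchy contour of radius $|t|/2$ must stay in the region of complex-analytic control, which is arranged by the slack factor $3/2<2$ in the radii.
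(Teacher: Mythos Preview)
Your proof is correct and follows essentially the same route as the paper: write $\ve(z)=\Psi(Q_{m,n}(iz))$ with $\Psi(z)=e^z-\sum_{k=0}^{m-2}z^k/k!$, use Lemma~\ref{Lem11.1} to bound $Q_{m,n}(iz)$ on a complex disc of radius $O(\sig_n^{1/3})$, combine with $|\Psi(z)|\leq C_M|z|^{m-1}$ to get the $p=0$ case, and then apply the Cauchy integral formula on a circle of radius proportional to $|t|$ for the derivatives. The only cosmetic differences are that the paper uses contour radius $|t|/4$ rather than $|t|/2$, and you are slightly more explicit about the $t=0$ endpoint and about the contour staying inside the region of control.
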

\begin{proof}
By Lemma \ref{Lem11.1}, when $|z|=O(\sig_n^{1/3})$ then $Q_{m,n}(z)$ is uniformly bounded in a ball of radius $M$ which is independent of $n$. Let $\Psi(z)=e^z-\sum_{j=0}^{m-2}\frac{z^j}{j!}$. Then by the Lagrange form of the Taylor remainder of order $m-2$ of the function $e^{z}$ we have
$$
\sup_{|z|\leq M}|\Psi(z)|\leq C_M|z|^{m-1}.
$$ 
Thus, by using the penultimate in Lemma  \ref{Lem11.1} we see that if $|z|=O(\sig_n^{1/3})$ then
$$
|\ve(z)|=|\Psi(Q_{m,n}(iz))|\leq C_M|Q_{m,n}(iz)|^{m-1}\leq C'|z|^{3(m-1)}\sig_n^{-(m-1)}.
$$
This proves the lemma for $p=0$. The proof when $p>0$ is completed by using that by the Cauchy integral formula we have
$$
|\ve^{(p)}(t)|\leq p!(|t|/4)^{-p}\max_{|z-t|=|t|/4}|\ve(z)|.
$$
\end{proof}

The following result is an immediate corollary of Lemma \ref{Lem11.1} and Lemma \ref{Lem11.2},
\begin{corollary}\label{Lemma 11.3}
Under Assumption \ref{GrowAssum}, on  domains of the form
 $\{|t|\leq C\sig_n^{1/3}\}$ all the first $m$ derivatives of $e^{Q_m(it)}$ are bounded by a constant which does not depend on $n$ and $t$.
\end{corollary}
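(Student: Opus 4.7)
The plan is to apply Faà di Bruno's formula \eqref{FDB} to $e^{h(t)}$ with $h(t)=Q_{m,n}(it)$. Since \eqref{FDB} expresses the $p$-th derivative as $e^{h(t)}$ times a finite sum (with combinatorial coefficients independent of $n$) of products $\prod_{j=1}^{p}(h^{(j)}(t))^{k_j}$ ranging over $\bar k$ with $\sum_j j k_j = p$, it suffices to establish two things on the domain $\{|t|\leq C\sig_n^{1/3}\}$: first, that $|e^{Q_{m,n}(it)}|$ is uniformly bounded, and second, that for each $1\leq j\leq m$ the derivative $\frac{d^j}{dt^j}Q_{m,n}(it)$ is bounded by a constant independent of $n$ and $t$.

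The first point is immediate from Lemma \ref{Lem11.1}, which already furnishes $|Q_{m,n}(it)|\leq C_1'$ on this domain, so $|e^{Q_{m,n}(it)}|\leq e^{|Q_{m,n}(it)|}\leq e^{C_1'}$. For the second point, differentiating the explicit polynomial \eqref{Q def} term by term gives
$$
\frac{d^j}{dt^j}Q_{m,n}(it)\;=\;\sum_{\ell=\max(j,3)}^{m} i^{\ell}\,\gamma_\ell(W_n)\,\frac{t^{\ell-j}}{(\ell-j)!}.
$$
Recalling that $\gamma_\ell(W_n)=i^{-\ell}\Lambda_n^{(\ell)}(0)$ together with Assumption \ref{GrowAssum} evaluated at $t=0$ yield $|\gamma_\ell(W_n)|\leq C_\ell\sig_n^{-(\ell-2)}$ for $3\leq\ell\leq m+1$. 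Combined with $|t|\leq C\sig_n^{1/3}$, each term in the sum above is controlled by a constant multiple of $\sig_n^{-(\ell-2)+(\ell-j)/3}=\sig_n^{(6-2\ell-j)/3}$, whose exponent is nonpositive for every admissible pair $(\ell,j)$ with $\ell\geq 3$, $j\geq 0$. Thus $|\frac{d^j}{dt^j}Q_{m,n}(it)|$ is uniformly bounded for $1\leq j\leq m$ on the given domain (at worst by an absolute constant for large enough $n$, and by a trivial finite bound for the finitely many small $n$).

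Inserting these two uniform bounds into \eqref{FDB} produces the desired uniform bound on $\frac{d^p}{dt^p}e^{Q_{m,n}(it)}$ for every $0\leq p\leq m$. There is no real obstacle here; the corollary is a bookkeeping exercise. One could alternatively deduce the same conclusion directly from Lemma \ref{Lem11.2}: the polynomial piece $\sum_{k=0}^{m-2}Q_{m,n}(it)^k/k!$ has derivatives controlled by the same exponent arithmetic just performed, while the error term $\ve(t)$ satisfies $|\ve^{(p)}(t)|\leq A|t|^{3(m-1)-p}\sig_n^{-(m-1)}=O(\sig_n^{-p/3})$ on $\{|t|\leq C\sig_n^{1/3}\}$, hence is uniformly bounded as well.
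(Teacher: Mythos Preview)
Your proof is correct. Your primary route via Fa\`a di Bruno \eqref{FDB} together with Lemma \ref{Lem11.1} and the direct term-by-term differentiation of \eqref{Q def} is a clean elementary argument; the exponent arithmetic $(6-2\ell-j)/3\leq 0$ for $\ell\geq 3$, $j\geq 0$ is exactly what is needed. The paper instead deduces the corollary from Lemma \ref{Lem11.1} and Lemma \ref{Lem11.2}, i.e.\ it passes through the Taylor decomposition $e^{Q_{m,n}(it)}=\sum_{k=0}^{m-2}Q_{m,n}(it)^k/k!+\ve(t)$ and bounds each piece---which is precisely the alternative you sketch at the end. Your Fa\`a di Bruno approach has the mild advantage of not invoking Lemma \ref{Lem11.2} at all, while the paper's route reuses machinery already in place; either way the corollary is indeed just bookkeeping.
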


\subsubsection{Passing from $Q$ to $P$}
\begin{proposition}\label{Prp12.1}
Let $P_{m,n}$ be as in \eqref{P def}.
Under Assumption \ref{GrowAssum}, 
if $|t|\leq C\sig_n^{1/3}$ then
$$
e^{Q_{m,n}(it)}=1+P_{m,n}(it)+\del(t)
$$
where for all $p\leq m$ we have
$$
\left|\del^{(p)}(t)\right|\leq A\max(|t|^{m+1-p},|t|^{3(m-1)-p})\sig_n^{-(m-1)}
$$
for some constant $A$. 
\end{proposition}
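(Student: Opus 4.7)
The plan is to reduce the proposition to Lemma \ref{Lem11.2} plus a direct comparison between the truncated exponential of $Q_{m,n}$ and the polynomial $P_{m,n}$. By Lemma \ref{Lem11.2} we already have, on $\{|t|\leq C\sigma_n^{1/3}\}$,
$$
e^{Q_{m,n}(it)}=\sum_{k=0}^{m-2}\frac{Q_{m,n}(it)^k}{k!}+\varepsilon(t),
$$
with $|\varepsilon^{(p)}(t)|\leq A|t|^{3(m-1)-p}\sigma_n^{-(m-1)}$. So it suffices to prove that the difference
$$
\delta_1(t):=\sum_{k=0}^{m-2}\frac{Q_{m,n}(it)^k}{k!}-1-P_{m,n}(it)
$$
satisfies $|\delta_1^{(p)}(t)|\leq A'|t|^{m+1-p}\sigma_n^{-(m-1)}$ on the same domain (for $|t|\leq 1$; for $|t|\geq 1$ a looser bound with $|t|^{3(m-1)-p}$ will suffice), because then $\delta:=\delta_1+\varepsilon$ gives the claim.

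First I would apply the multinomial formula termwise to obtain
$$
\frac{Q_{m,n}(it)^k}{k!}=\sum_{\sum_j k_j=k}\frac{1}{k_1!\cdots k_{m-2}!}\prod_{j=1}^{m-2}\left(\frac{\gamma_{j+2}(W_n)}{(j+2)!}\right)^{k_j}(it)^{\sum_j(j+2)k_j}.
$$
Comparing with \eqref{P def}, the polynomial $1+P_{m,n}(it)$ is exactly the contribution from all tuples $\bar k$ with $\sum_j jk_j\leq m-2$ (such tuples automatically have $k=\sum_j k_j\leq m-2$, so they are already captured by the truncation). Hence $\delta_1(t)$ equals the sum of those terms with $1\leq k\leq m-2$ and $s:=\sum_j jk_j\geq m-1$.

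Next I would estimate each such term. Writing $\sum_j(j+2)k_j=s+2k$ and using Assumption \ref{GrowAssum} (so $|\gamma_{j+2}(W_n)|\leq C_j\sigma_n^{-j}$ and therefore $|\prod_j\gamma_{j+2}(W_n)^{k_j}|=O(\sigma_n^{-s})$), each term is bounded by
$$
C\sigma_n^{-s}|t|^{s+2k}=C\sigma_n^{-(m-1)}\bigl(|t|/\sigma_n\bigr)^{s-(m-1)}|t|^{(m-1)+2k}.
$$
Because $|t|\leq C\sigma_n^{1/3}\leq \sigma_n$ for $\sigma_n$ large, the factor $(|t|/\sigma_n)^{s-(m-1)}\leq 1$, so each term is at most $C\sigma_n^{-(m-1)}|t|^{(m-1)+2k}$. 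Summing over the finitely many tuples (bounded in number by a constant depending only on $m$) gives
$$
|\delta_1(t)|\leq C'\sigma_n^{-(m-1)}\sum_{k=1}^{m-2}|t|^{(m-1)+2k},
$$
which for $|t|\leq 1$ is dominated by $|t|^{m+1}$ and for $|t|\geq 1$ by $|t|^{3m-5}\leq |t|^{3(m-1)}$. For the derivative bound, since $\delta_1$ is a polynomial in $t$ I would differentiate termwise: the $p$-th derivative of $t^\alpha$ is $O(|t|^{\alpha-p})$ for $\alpha\geq p$ and zero otherwise, and since every exponent appearing in $\delta_1$ satisfies $\alpha\geq m+1\geq p$ for $p\leq m$, the same dichotomy yields $|\delta_1^{(p)}(t)|\leq C''\max(|t|^{m+1-p},|t|^{3(m-1)-p})\sigma_n^{-(m-1)}$.

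The main obstacle is the combinatorial book-keeping that identifies the index set of "junk" tuples (those with $\sum_j jk_j\geq m-1$ yet still $\sum_j k_j\leq m-2$), and verifying that the two power ranges $m+1$ and $3(m-1)$ genuinely bracket the powers of $|t|$ that arise. The key quantitative mechanism that makes this work is the trade-off $(|t|/\sigma_n)^{s-(m-1)}\leq 1$, which exchanges an extra factor of $\sigma_n^{-1}$ (coming from each additional cumulant beyond the threshold $m-1$) for a factor of $|t|$. Everything else—the reduction to Lemma \ref{Lem11.2}, the multinomial identification of $P_{m,n}$, and the polynomial derivative estimate—is routine once this bookkeeping is set up.
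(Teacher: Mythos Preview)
Your proposal is correct and follows essentially the same route as the paper: reduce to Lemma \ref{Lem11.2}, expand $\sum_{k=0}^{m-2}Q_{m,n}(it)^k/k!$ via the multinomial formula, identify $1+P_{m,n}(it)$ with the tuples satisfying $\sum_j jk_j\le m-2$, and bound the remaining ``junk'' monomials using the cumulant estimates from Assumption \ref{GrowAssum}. The only substantive difference is in the $|t|\ge 1$ step: the paper trades $\sigma_n^{-1}$ for $|t|^{-3}$ via $|t|\le C\sigma_n^{1/3}$, arriving directly at the exponent $3(m-1)$, whereas you trade $\sigma_n^{-1}$ for $|t|^{-1}$ via the weaker $|t|\le\sigma_n$, landing on the exponent $(m-1)+2k\le 3m-5$, which you then relax to $3(m-1)$. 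Your trade-off is simpler and in fact gives a slightly sharper intermediate bound, but both reach the stated conclusion.
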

\begin{proof}
Using Lemma \ref{Lem11.2}, we only need to bound the first $p$ derivatives of 
$$
d(t):=\sum_{k=0}^{m-2}\frac{Q_{m,n}(it)^k}{k!}-P_{m,n}(it).
$$
Notice that by the definitions \eqref{P def} and \eqref{Q def} of $P_{m,n}$ and $Q_{m,n}$, respectively, we have
$$
d(t)=\sum_{\bar k}D_{\bar k,m-2}\prod_{j=1}^{m-2}\left(\big(\gamma_{j+2}(W_n)\big)^{k_j}\right)(it)^{3k_1+4k_2+...+mk_{m-2}}
$$
where 
$$
D_{\bar k,m-2}=\frac{1}{\prod_{j=1}^{m-2} k_j!((j+2)!)^{k_j}}
$$
and $\bar k=(k_1,...,k_{m-2})$ ranges over all the tuples of nonnegative integers so that $\sum_{j}k_j\leq m-2$ and $k_1+2k_2+...+(m-2)k_{m-2}\geq m-1$. It is enough to show that the $p$-th derivatives of each monomial 
$$
w_{\bar k}(t)=\prod_{j=1}^{m-2}\left(\big(\gamma_{j+2}(W_n)\big)^{k_j}\right)(it)^{3k_1+4k_2+...+mk_{m-2}}
$$
admit the desired upper bounds. Let us fix such a monomial. Then 
$$
\left|\frac{d^p}{dt^p}\,w_{\bar k}(t)\right|\leq \prod_{j=1}^{m-2}\left|\big(\gamma_{j+2}(W_n)\big)^{k_j}\right||t|^{3k_1+4k_2+...+mk_{m-2}-p}.
$$
Notice that 
\begin{equation}\label{U}
3k_1+4k_2+...+mk_{m-2}=2\sum_{j}k_j+\sum_{j}jk_j\geq m+1
\end{equation}
and so the above power of $|t|$ is positive.

Next, let us suppose that $|t|\leq1$. Since 
$$
3k_1+4k_2+...+mk_{m-2}-p\geq m+1-p
$$
and by Assumption \ref{GrowAssum}
\begin{equation}\label{U0}
|\gamma_j(W_n)|\leq C_j\sig_n^{-(j-2)},\, 3\leq j\leq m+1
\end{equation}
and $k_1+2k_2+...+(m-2)k_{m-2}\geq m-1$
we conclude that 
$$
\left|\frac{d^p}{dt^p}\,w_{\bar k}(t)\right|\leq C|t|^{m+1-p}\sig_n^{-(m-1)}.
$$

To complete the proof, let us consider the case when  $1\leq |t|\leq c\sig_n^{1/3}$ for some $c>0$.
Then, using that $\sum_j jk_j\geq m-1>\sum_j k_j$ together with \eqref{U0} we have 
$$
\left|\frac{d^p}{dt^p}\,w_{\bar k}(t)\right|\leq C|t|^{-p}|t|^{\sum_j  jk_j+2\sum_j k_j}\sig_n^{-\sum_j jk_j}
$$
$$
=C|t|^{3(m-1)-p}\left(|t|^{\sum_j  jk_j+2\sum_j k_j-3(m-1)}\sig_n^{-(\sum_j jk_j-(m-1))}\right)\sig_n^{-(m-1)}
$$
$$
\leq C'|t|^{3(m-1)-p}\left(|t|^{\sum_j  jk_j+2\sum_j k_j-3(m-1)}|t|^{-3(\sum_j jk_j-(m-1))}\right)\sig_n^{-(m-1)}
$$
$$
=C'|t|^{3(m-1)-p}\left(|t|^{2\sum_j k_j-2\sum_j j k_j}\right)\sig_n^{-(m-1)}\leq C'|t|^{3(m-1)-p}\sig_n^{-(m-1)}.
$$
 \end{proof}

\begin{corollary}\label{Cor12.2}
Under Assumption \ref{GrowAssum},
if $|t|\leq C\sig_n^{1/3}$ then there is a constant $A>0$ so that for all $p\leq m$,
$$
\left|P_{m,n}^{(p)}(it)\right|\leq A\min(1,|t|^{-p})
$$
\end{corollary}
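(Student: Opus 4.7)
The cleanest approach is a direct termwise estimate on the polynomial $P_{m,n}$ itself, rather than going through the identity in Proposition \ref{Prp12.1}. Writing $P_{m,n}(z)=\sum_{\bar k}c_{\bar k}\,z^{N(\bar k)}$, where the sum runs over the multi-indices $\bar k=(k_1,\dots,k_{m-2})$ with $\sum_j k_j\ge 1$ and $M(\bar k):=\sum_j jk_j\le m-2$, and where
$$
N(\bar k):=\sum_{j=1}^{m-2}(j+2)k_j,\qquad c_{\bar k}=\prod_{j=1}^{m-2}\frac{1}{k_j!}\Big(\frac{\gamma_{j+2}(W_n)}{(j+2)!}\Big)^{k_j},
$$
Assumption \ref{GrowAssum} gives $|\gamma_{j+2}(W_n)|\le C_j\sig_n^{-j}$, so $|c_{\bar k}|\le C\sig_n^{-M(\bar k)}$. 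Differentiating the function $t\mapsto P_{m,n}(it)$ $p$ times (with $p\le m$), the contribution of each monomial is bounded by $C_p\,\sig_n^{-M(\bar k)}|t|^{N(\bar k)-p}$ when $N(\bar k)\ge p$ and vanishes otherwise. Hence
$$
\bigl|P_{m,n}^{(p)}(it)\bigr|\le A\!\!\sum_{\substack{\bar k\\ N(\bar k)\ge p}}\sig_n^{-M(\bar k)}|t|^{N(\bar k)-p}.
$$

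The combinatorial heart of the estimate is the inequality $N(\bar k)\le 3M(\bar k)$, which holds because $3M(\bar k)-N(\bar k)=\sum_j(2j-2)k_j\ge 0$. Combined with the hypothesis $|t|\le C\sig_n^{1/3}$ it gives
$$
\sig_n^{-M(\bar k)}|t|^{N(\bar k)}\le C^{N(\bar k)}\sig_n^{N(\bar k)/3-M(\bar k)}\le C^{N(\bar k)},
$$
which is a constant depending only on $m$. This is the one nontrivial step; the rest is bookkeeping.

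I would then split into two cases. If $|t|\le 1$, then since $N(\bar k)\ge p$ we have $|t|^{N(\bar k)-p}\le 1$, and each term is bounded by $\sig_n^{-M(\bar k)}\le 1$; summing over the (finitely many) multi-indices gives $|P_{m,n}^{(p)}(it)|\le A$, matching $A\min(1,|t|^{-p})=A$ in this regime. If $1\le |t|\le C\sig_n^{1/3}$, I rewrite the generic term as
$$
\sig_n^{-M(\bar k)}|t|^{N(\bar k)-p}=\bigl(\sig_n^{-M(\bar k)}|t|^{N(\bar k)}\bigr)|t|^{-p}\le C^{N(\bar k)}|t|^{-p},
$$
by the displayed inequality above, giving $|P_{m,n}^{(p)}(it)|\le A|t|^{-p}=A\min(1,|t|^{-p})$. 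The two cases together yield the claim. No serious obstacle is expected, as this is essentially a clean exercise in exponent counting once the inequality $N(\bar k)\le 3M(\bar k)$ is spotted.
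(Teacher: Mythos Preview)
Your proof is correct and takes a more direct route than the paper. For $|t|\le 1$ both arguments coincide: the paper also appeals directly to the definition of $P_{m,n}$ and the boundedness of its coefficients. For $|t|\ge 1$, however, the paper argues indirectly via the identity $P_{m,n}(it)=e^{Q_{m,n}(it)}-1-\del(t)$, invoking Lemma \ref{Lem11.1} (boundedness of $Q_{m,n}$, hence of $e^{Q_{m,n}(iz)}$, which feeds a Cauchy-integral bound on its derivatives) together with the bound on $\del^{(p)}$ from Proposition \ref{Prp12.1}. Your termwise estimate bypasses this detour entirely: the single inequality $N(\bar k)=\sum_j(j+2)k_j\le 3\sum_j jk_j=3M(\bar k)$ does all the work, and is in fact the same exponent-counting that the paper deploys in the proof of Proposition \ref{Prp12.1} (there applied to the remainder polynomial $d(t)$ rather than to $P_{m,n}$ itself). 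The paper's route has the virtue of recycling results already on the shelf, so in context it is essentially free; your argument is more self-contained and makes the mechanism transparent.
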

\begin{proof}
Notice that $P_{m,n}(it)=e^{Q_{m,n}(it)}-1-\del(t)$. Now the corollary follows from the combination of Lemma \ref{Lem11.1}, Lemma \ref{Lem11.2} and Proposition \ref{Prp12.1}. The upper bound $A$ when $|t|\leq 1$ follows directly from the definition  \eqref{P def} of $P_{m,n}$ together with Assumption \ref{GrowAssum} which yields that the coefficients of $P_{m,n}$ are bounded. 
\end{proof}

\subsubsection{Completing the proof of Proposition \ref{Prp3}}

\begin{lemma}\label{Lem13.1}
Under Assumption \ref{GrowAssum},
if $|t|\leq C\sig_n^{1/3}$ then 
$$
f_n(t)=e^{-t^2/2}\left(1+P_m(it)+r(t)\right)
$$
with 
$$
\left|r^{(p)}(t)\right|\leq C_m\sig_n^{-(m-1)}\max(|t|^{m+1-p},|t|^{3(m-1)-p}),\,p=0,1,...,m.
$$
\end{lemma}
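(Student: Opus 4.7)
The plan is simply to multiply together the expansions supplied by Proposition~\ref{Prp8.2} and Proposition~\ref{Prp12.1}. Since $(m+1)/(m-1)\ge 1 > 1/3$, the domain $\{|t|\le C\sigma_n^{1/3}\}$ lies in the intersection of the domains of validity of both propositions, so on this domain we may write
\[
e^{t^2/2}f_n(t)=e^{Q_{m,n}(it)}\bigl(1+\varepsilon_{n,m}(t)\bigr)=\bigl(1+P_{m,n}(it)+\delta(t)\bigr)\bigl(1+\varepsilon_{n,m}(t)\bigr).
\]
Expanding, $e^{t^2/2}f_n(t)=1+P_{m,n}(it)+r(t)$ with
\[
r(t)\;=\;\delta(t)\;+\;\varepsilon_{n,m}(t)\;+\;P_{m,n}(it)\,\varepsilon_{n,m}(t)\;+\;\delta(t)\,\varepsilon_{n,m}(t).
\]
It remains to bound the first $m$ derivatives of the four summands.

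The first two summands are handled directly by Propositions~\ref{Prp8.2} and~\ref{Prp12.1}: each of $\delta^{(p)}(t)$ and $\varepsilon_{n,m}^{(p)}(t)$ is bounded by a constant times $\max(|t|^{m+1-p},|t|^{3(m-1)-p})\,\sigma_n^{-(m-1)}$, which is exactly the target bound. For the cross term $P_{m,n}(it)\varepsilon_{n,m}(t)$ I would apply Leibniz:
\[
\bigl(P_{m,n}(it)\varepsilon_{n,m}(t)\bigr)^{(p)}=\sum_{k=0}^{p}\binom{p}{k}\bigl(P_{m,n}(it)\bigr)^{(k)}\varepsilon_{n,m}^{(p-k)}(t),
\]
and then combine the bound $|(P_{m,n}(it))^{(k)}|\le A\min(1,|t|^{-k})$ from Corollary~\ref{Cor12.2} with $|\varepsilon_{n,m}^{(p-k)}(t)|\le C_m|t|^{m+1-p+k}\sigma_n^{-(m-1)}$ from Proposition~\ref{Prp8.2}. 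For $|t|\ge 1$ the factor $\min(1,|t|^{-k})|t|^{k}$ is bounded by $1$, yielding the summand-by-summand bound $|t|^{m+1-p}\sigma_n^{-(m-1)}$, while for $|t|\le 1$ the polynomial factors are each bounded by $|t|^{m+1-p}$. In both regimes the total is absorbed into $\max(|t|^{m+1-p},|t|^{3(m-1)-p})\sigma_n^{-(m-1)}$.

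Finally, the product $\delta(t)\varepsilon_{n,m}(t)$ is a genuine lower-order term: by Leibniz and the two bounds above it is of size $\sigma_n^{-2(m-1)}$ times a polynomial in $|t|$, hence easily absorbed into the desired $\sigma_n^{-(m-1)}$ error after using $|t|\le C\sigma_n^{1/3}$ to trade powers of $|t|$ for powers of $\sigma_n$. Multiplying through by $e^{-t^2/2}$ gives the stated expansion of $f_n(t)$. The main (minor) obstacle is really only the bookkeeping for the cross term $P_{m,n}(it)\varepsilon_{n,m}(t)$, where one must check that the $|t|^{-k}$ decay from differentiating $P_{m,n}$ exactly compensates the extra $|t|^{k}$ picked up by differentiating $\varepsilon_{n,m}$ fewer times; the other three summands are entirely routine.
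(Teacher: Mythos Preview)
Your proposal is correct and is essentially the same argument as the paper's: you multiply the expansions from Propositions~\ref{Prp8.2} and~\ref{Prp12.1}, write $r(t)=\bigl(1+P_{m,n}(it)\bigr)\varepsilon_{n,m}(t)+\delta(t)\bigl(1+\varepsilon_{n,m}(t)\bigr)$, and bound the derivatives via Leibniz together with Corollary~\ref{Cor12.2}. The paper's proof is terser but uses exactly the same decomposition and the same three inputs; your additional bookkeeping for the cross terms is accurate.
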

\begin{proof}
Using Propositions \ref{Prp8.2} and \ref{Prp12.1} we have 
$$
f_n(t)=e^{-t^2/2}e^{Q_m(it)}\left(1+\ve_{m,n}(t)\right)=e^{-t^2/2}\left(1+P_m(it)+\del(t)\right)\left(1+\ve_{m,n}(t)\right).
$$
Thus 
$$
r(t)=\left(1+P_m(it)\right)\ve_{m,n}(t)+\del(t)\left(1+\ve_{m,n}(t)\right).
$$
Now the bounds on the derivative of $r(t)$ follow from Propositions \ref{Prp8.2} and \ref{Prp12.1}  and Corollary \ref{Cor12.2} together with the binomial formula for the derivatives of products of two functions.
\end{proof}

\begin{corollary}\label{Prp13.2}
If $|t|\leq C\sig_n^{1/3}$ then 
$$
\left|\frac{d^p}{dt^p}\left(f_n(t)-g_{m,n}(t)\right)\right|\leq C_m\sig_n^{-(m-1)}\max(|t|^{m+1-p},|t|^{3(m-1)+p})e^{-c_0t^2}
$$ 
for some constants $C_m,c_0>0$.
\end{corollary}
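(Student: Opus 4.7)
The statement is almost a direct corollary of Lemma \ref{Lem13.1} combined with the product rule, so the plan is short. By Lemma \ref{Lem13.1}, on the domain $|t|\le C\sig_n^{1/3}$ we may write
\[
f_n(t)-g_{m,n}(t) \;=\; e^{-t^2/2}\bigl(1+P_{m,n}(it)+r(t)\bigr) \;-\; e^{-t^2/2}\bigl(1+P_{m,n}(it)\bigr) \;=\; e^{-t^2/2}\,r(t),
\]
where $r$ satisfies the derivative bounds
\[
|r^{(q)}(t)|\;\le\; C_m\,\sig_n^{-(m-1)}\,\max\bigl(|t|^{m+1-q},\,|t|^{3(m-1)-q}\bigr),\qquad 0\le q\le m.
\]
Thus the entire task is to differentiate the product $e^{-t^2/2}r(t)$ a total of $p\le m$ times.

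\textbf{Leibniz step.} The Leibniz rule gives
\[
\frac{d^p}{dt^p}\bigl(e^{-t^2/2}r(t)\bigr) \;=\; \sum_{k=0}^{p}\binom{p}{k}\,\frac{d^{k}}{dt^{k}}\!\left(e^{-t^2/2}\right)\, r^{(p-k)}(t).
\]
Since $\frac{d^{k}}{dt^{k}}e^{-t^2/2}=(-1)^k\varphi(t)\sqrt{2\pi}\,H_k(t)$ up to a harmless constant, there are constants $C_k'$ with
\[
\Bigl|\frac{d^{k}}{dt^{k}}e^{-t^2/2}\Bigr| \;\le\; C_k'\,(1+|t|)^{k}\,e^{-t^2/2}.
\]
Plugging in the bound on $r^{(p-k)}(t)$ yields, for each $0\le k\le p$,
\[
\binom{p}{k}\,C_k'\,(1+|t|)^{k}e^{-t^2/2}\cdot C_m\sig_n^{-(m-1)}\max\!\bigl(|t|^{m+1-(p-k)},\,|t|^{3(m-1)-(p-k)}\bigr).
\]

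\textbf{Collecting the factors.} Expanding $(1+|t|)^k$ and using $k\le p$ shows that each term is bounded by
\[
C_{m,p}\,\sig_n^{-(m-1)}\,e^{-t^2/2}\,\max\!\bigl(|t|^{m+1-p},\,|t|^{3(m-1)-p+2p}\bigr)\,\bigl(1+|t|^{p}\bigr),
\]
which in turn is dominated (absorbing the polynomial prefactor into a slightly smaller Gaussian exponent) by
\[
C_m\,\sig_n^{-(m-1)}\,\max\!\bigl(|t|^{m+1-p},\,|t|^{3(m-1)+p}\bigr)\,e^{-c_0 t^2}
\]
for any chosen $c_0\in(0,1/2)$. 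Summing the $p+1$ such terms gives exactly the claimed bound.

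\textbf{Remarks.} No real obstacle arises: the non-trivial analytic work was carried out in Propositions \ref{Prp8.2}, \ref{Prp12.1} and Lemma \ref{Lem13.1}; the corollary is only a bookkeeping step that converts the already proven bounds on $r^{(q)}$ into the same bounds with the Gaussian weight. The only mild point is that differentiating $e^{-t^2/2}$ produces Hermite polynomial factors of degree up to $p$, but these polynomial factors are absorbed at the cost of replacing the exponent $1/2$ by any strictly smaller constant $c_0$, which is permitted by the statement.
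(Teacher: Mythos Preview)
Your proof is correct and follows exactly the same approach as the paper: write $f_n(t)-g_{m,n}(t)=e^{-t^2/2}r(t)$ via Lemma~\ref{Lem13.1} and apply the Leibniz (binomial) rule for derivatives of a product together with the bounds on $r^{(q)}$. The paper's own proof is the same two-line argument, so your more detailed bookkeeping (tracking the Hermite factors and absorbing the extra $(1+|t|^{p})$ into the Gaussian weight) simply spells out what the paper leaves implicit.
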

\begin{proof}
In the notations of Lemma \ref{Lem13.1} we have 
$$
f_n(t)-g_{m,n}(t)=e^{-t^2/2}r(t).
$$
Now we can use the binomial formula for the derivatives of a product of two functions and Lemma \ref{Lem13.1}.
\end{proof}

\begin{proof}[Completion of the proof of Proposition \ref{Prp3}]
 On the interval $|t|\leq C\sig_n^{1/3}$ for some $C>0$ the desired estimate follows from Corollary \ref{Prp13.2}. 
On the intervals $C\sig_n^{1/3}\leq |t|\leq C_1\sig_n$ (for some $C_1$ small enough), the desired estimate follows from Lemma \ref{Prp16.1} together with Proposition \ref{Prp17.1}.
\end{proof}

\section{Examples with  some stationary/homogeneity}\label{Stat}
In this section we will provide examples when Assumption \ref{GrowAssum} holds with all $m$ for several classes of stationary processes. 
We stress that our results are new for all of these procuresses, and what is currently known in literature is only the uniform bounds 
$$
\sup_{x\in\mathbb R}\left|F_n(x)-\Phi(x)\right|\leq C\sig_n^{-1}
$$
where $F_n$ is the distribution function of the underlying sequence. 
The fact that Assumption \ref{GrowAssum} holds in these examples essentially follows from a general functional analytic scheme that is known to apply to these examples (see Sections \ref{Sch} and \ref{Sec5.2} below). Thus, the main novelty is that we get the non-uniform estimates as in Theorem \ref{BE2} and not that Assumption \ref{GrowAssum} holds.

\subsection{General scheme}\label{Sch}
In this section we describe a functional analytic framework that in the next section will be ``verified" for several classes of stationary sequences. 

\begin{assumption}\label{GenAss}
There are analytic functions $\Pi(z),U(z)$ and $\del_n(z)$ of a complex variable $z$ which are defined on a complex neighborhood of $0$ so that:  
\begin{enumerate}
\item $\Pi(0)=0$, $\Pi''(0)>0$ and $U(0)=1$;
\vskip0.2cm
\item We have
$$
|\del_n(z)|\leq C\del^n
$$
for some $\del\in(0,1)$ and $C>0$;
\vskip0.2cm
\item We have
\begin{equation}\label{Stat assmp}
\bbE[e^{zS_n}]=e^{n\Pi(z)}(U(z)+\del_n(z))
\end{equation}
\end{enumerate}
\end{assumption}
We note that since $\del_n(0)=0$ and $\del_n(z)$ is uniformly bounded it follows that $|\del_n(z)|\leq C'|z|\del^n$, and so, by possibly considering a smaller neighborhood of $0$ we can always assume that $|\del_n(z)|\leq \ve \del^n$ for an arbitrary small $\ve$.

\begin{lemma}\label{Approx Lemma}
For all $m$, all the conditions specified in Assumption \ref{GrowAssum}  are met under Assumption \ref{GenAss}.
 Moreover, 
 \begin{equation}\label{Relat0}
 \sig_n^2=n\Pi''(0)+H''(0)+O(\del^n).
\end{equation}

\end{lemma}
\begin{proof}
By taking the logarithm of both sides of \eqref{Stat assmp} we see that
$$
\Gamma_n(z):=\ln\bbE[e^{zS_n}]=n\Pi(z)+H(z)+r_n(z)
$$
where $r_{n}(z)=O(|z|\del^n)$ and $H(z)=\ln U(z)$ are both analytic functions.
Now \eqref{Relat0}  follow by differentiating $\Gamma_n(z)$, using the Cauchy integral formula and plugging in $z=0$. 
Note also that the same argument yields that for $j\geq3$, for all $|t|$ small enough we have
 $$
 \Lambda_n^{(j)}(t)=\sig_n^{-j}\Gamma_n^{(j)}(it/\sig_n)=\sig_n^{-j}\left(n\Pi^{(j)}(it/\sig_n)+H^{(j)}(it/\sig_n)\right)+O(\del^n)
 $$
 and since $\sig_n^2\asymp \Pi''(0)n$ we see that
  Assumption \ref{GrowAssum} holds true for every $m$. 
 \end{proof}

\subsection{Applications to homogeneous elliptic Markov chains}
In this section we will show that partial sums generated by geometrically ergodic homogeneous Markov chain and a single function $f$ satisfy the conclusion of Theorem \ref{BE2}. For this purpose we show that the conditions of the previous section are met. Again, the main innovation is that we are able to obtain non-uniform Berry-Esseen estimates. In the setup of this section uniform estimates follow from Nagaev's method \cite{Nag2} (see also \cite{HH}).

Let $(\xi_j)$ be an homogeneous (not necessarily stationary) Markov chain taking values on some measurable space $\cX$. 
Let $R$ be the corresponding Markov operator, that is $R$  maps a bounded function $g$ on $\cX$ to the function $Rg$ on $\cX$ given by
$$
Rg(x):=\bbE[g(\xi_{1})|\xi_0=x].
$$ 
Let us assume that $R$ has a continuous action on a Banach space $(B,\|\cdot\|)$ of function on $\cX$ with a norm $\|\cdot\|$ satisfying $\|g\|\geq \sup|g|$, and that for every function $f\in B$ and a complex number $z$ we have $e^{zf}\in B$ and that the function $z\to e^{zf}$ is analytic. Moreover, let us assume that $Rg$ is quasi compact and that the constant function $h_0=\textbf{1}$ is the unique eigenfunction with eigenvalue of modulus $1$. Namley, there is a unique stationary measure $\mu$   for the chain and constants $\del\in(0,1)$ and $C>0$ so that for every function $g\in B$ we have
$$
\|R^n g-\mu(g)\|\leq C\|g\|\del^n.
$$ 

\begin{example}
Suppose that the classical Doeblin condition holds true:  there exists a probability measure $\nu$ on $\cX$ and a constant $c>0$ so that for every measurable set $\Gamma\subset\cX$ we have
 $$
 \bbP(\xi_{n_0}\in\Gamma|\xi_0=x)\geq c\nu(\Gamma).
 $$
 Then the Markov chain is geometrically egrodic, that is $R$ satisfies the above properties with the norm $\|g\|=\sup|g|$. 
\end{example}

Next,  let $f\in B$  and set 
$$
S_n=\sum_{j=1}^n(f(\xi_j)-\bbE[f(\xi_j)]).
$$
 For a complex parameter $z$, let 
$$
R_z(g)=R_z(e^{zf}g).
$$
Then $z\to R_z$ is analytic. Thus by applying a holomorphic perturbation theorem we get that in a neighborhood of the origin there are analytic functions $\la(z),\nu^{(z)},h^{(z)}$ of $z$, so that  $\la(z)\in\bbC\setminus\{0\}$, $h^{(z)}\in B$, $\nu^{(z)}\in B^*$ ($B^*$ is the dual space) and $\la(0)=1, h^{(0)}=\textbf{1}$ and $\nu^{(0)}=\mu$, where $\textbf{1}$ is the constant function taking the value $1$.
 Moreover, 
 $$
R_z^{n}=\la^n(z)\left(\nu^{(z)}\otimes h^{(z)}+O(\del^n)\right)
 $$ 
 where $\big(\nu\otimes h\big)(g)=\nu(g)h$ and $\del\in(0,1)$.
 Thus, with $\Pi(z)=\ln\la(z)$ and $U(z)=\bbE[h^{(z)}(X_1)]$ we have 
 $$
 \bbE[e^{zS_n}]=\bbE[R_z^n\textbf{1}(X_1)]=e^{n\Pi(z)}\left(U(z)+O(\del^n)\right).
 $$
 We thus get the following result.
\begin{theorem}
Assumption  \ref{GenAss} is in force in the above circumstances. Therefore, with
$A_n=O(1)$ and either  $B_n=\sig\sqrt n+O(1)$ or $B_n=\sig_n+O(1)$,
 the non-uniform Berry-Esseen theorem  of any power holds true (i.e. Theorem \ref{BE2} for all $m$).
\end{theorem}
\begin{remark}
We would like also to refer to the first example in Remark \ref{Add} for related types of Markov chains for which Assumption  \ref{GenAss} (and hence Theorem \ref{BE2}) holds true.
\end{remark}
\begin{remark}\label{DiffRem}
  In order to verify Assumption \ref{GrowAssum} with  a given $m$, it is enough to assume that the operators $t\to R_{it}$ are  $C^{m+1}$ in a (real) neighborhood of the origin. Then, under the same conditions on $R$  we get that 
  \begin{equation}\label{lpp}
    R_{it}^n=\la(t)^n\nu^{(it)}\otimes h^{(it)}+N^n(t)=\la(t)^n\left(\nu^{(it)}\otimes h^{(it)}+\tilde N^n(t)\right)
  \end{equation}
  where  $\tilde N(t)=N(t)/\la(t)$ and $N(t)$ satisfies that
$\|N^n(t)\|\leq C\del^n$ for some $\del\in(0,1)$ (all the expressions are $C^{m+1}$ in $t$). Now, since $
\la(0)=1$, if $|t|$ is small enough we have  $\|\tilde N^n(t)\|\leq \del_1^n$  for some $\del_1\in(0,1)$. 
Since all the first $m+1$ derivatives of $\tilde N(t)^n$ decay exponentially fast to $0$ as $n\to\infty$ we can verify Assumption \ref{GrowAssum} for the given $m$ by differentiating the logarithms of both sides of \eqref{lpp}. 

The above setting includes the case when the two sided Doeblin conditions holds and $f\in L^{m+1}(\mu)$: there exists $c>1$ such that for every state $x$ and all measureble subsets $\Gamma$ of the state space we have
$$
c^{-1}\mu(\Gamma)\leq \mathbb P(\xi_{1}\in\Gamma|\xi_{0}=x)\leq c\mu(\Gamma)
$$
Indeed, in this case we can take $\|g\|=\sup|g|$ and $R_{it}$ is clearly of class $C^{m+1}$.
   \end{remark}

\subsection{A functional analytic setup with applications to hyperbolic dynamical systems}\label{Sec5.2}
In this section we will present a classical mechanism to verify Assumption \ref{GenAss}. The results in this section are well known, but we decided to present them to make the paper more self-contained. In the setting of this section the uniform Berry-Esseen theorem follow from the arguments in \cite{HH}, and our main novelty is that we obtain non-uniform Berry-Esseen theorems.

 Let $(B,\|\cdot\|)$ be a Banach space of functions on some measurable space $\cX$ so that the constant functions are in $B$ and  $\|g\|\geq\|g\|_{L^1(\mu)}$ for some probability measure $\mu$ which, when viewed as  linear functional, belongs to the dual $B^*$ of $B$.
   We also assume that $\|fg\|\leq C\|f\|\|g\|$ for some $C>0$ and all $f,g\in B$. 
 In addition, we  suppose that there is an operator $\cL$ acting on $B$  so that $\cL^*\mu=\mu$ and a sequence of $\cX$-valued random variables $\{U_j:j\geq1\}$ so that  for all $f_1,...,f_n\in B$ we have
 \begin{equation}\label{ExAss}
  \bbE\left[\prod_{j}f_j(U_j)\right]=\mu\left(\cL_{f_n}\circ\cdots\circ \cL_{f_2}\circ \cL_{f_1}h_0\right)
 \end{equation}
where  $h_0\in B$ is positive and satisfies $\mu(h_0)=1$, $\cL h_0=h_0$  and
 $$
 \cL_{f_j}(g):=\cL(f_j g).
 $$
 We then take $f\in B$ so that $\mu(f)=0$ and set $S_n=\sum_{j=1}^n X_j$, for $X_j=f(U_j)$. For each complex number $z\in\bbC$ we also set 
 $$
 \cL_z(g)=\cL_{e^{zf}}(g)=\cL(e^{z f}g).
 $$
 Let us also assume that\footnote{It will follow from the assumptions of Proposition \ref{PrP} that the limit $\sig^2=\Pi''(0)$ always exists, and so the main additional assumption here concerns the positivity of $\sig^2$. In our applications, a characterization for the latter positivity are well known (and will be recalled in the next sections).} 
 $$
\sig^2=\lim_{n\to\infty}\frac1n\text{Var}(S_n)>0.
 $$
We have the following.
 \begin{proposition}\label{PrP}
In the above circumstances,
 Assumption \ref{GenAss} is in force for all $m$ if
 $z\to\cL_z$ is well defined analytic in $z$ in a complex neighborhood of the origin, the operator $\cL$ is quasi-compact, the spectral radius of $\cL$ is $1$, and up to a multiplicative constant, the function $h_0$ is the unique eigenfunction corresponding to an eigenvalue of modules one.  
 \end{proposition}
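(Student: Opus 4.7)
The plan is to apply the classical Nagaev--Guivarc'h--Keller perturbation method to the operator family $\cL_z$. Applying the moment formula \eqref{ExAss} with $f_j=e^{zf}$ for every $j$ gives the key identity
$$
\bbE[e^{zS_n}]=\mu(\cL_z^n h_0),
$$
so Assumption \ref{GenAss} becomes a statement about $n$-th iterates of $\cL_z$ for $z$ close to $0$.

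At $z=0$ the operator $\cL$ is quasi-compact with spectral radius one, and $1$ is the only peripheral eigenvalue with eigenspace $\bbC h_0$. Together with $\cL^*\mu=\mu$ and $\mu(h_0)=1$, this forces the spectral projection onto that eigenspace to be the rank-one operator $P_0 g=\mu(g)h_0$, and the rest of the spectrum of $\cL$ to lie in a disk of radius $\rho_0<1$. Since $z\mapsto\cL_z$ is analytic on a complex neighborhood of $0$, standard analytic perturbation theory (Kato, or Hennion--Herv\'e in this context) yields a neighborhood $V$ of $0$, a number $\rho\in(\rho_0,1)$, and analytic maps $\la(z)\in\bbC$, $P(z)$ (of rank one) and $N(z)$ on $V$ satisfying
$$
\cL_z=\la(z)P(z)+N(z),\quad P(z)N(z)=N(z)P(z)=0,\quad \|N(z)^n\|\leq C\rho^n
$$
uniformly in $z\in V$, with $\la(0)=1$ and $P(0)=P_0$. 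Iterating gives $\cL_z^n=\la(z)^n P(z)+N(z)^n$.

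Next, I would define
$$
\Pi(z):=\log\la(z),\quad U(z):=\mu(P(z)h_0),\quad \del_n(z):=e^{-n\Pi(z)}\mu(N(z)^n h_0),
$$
all analytic on a neighborhood $V'\subset V$ on which $\la(z)$ stays near $1$. By construction $\bbE[e^{zS_n}]=e^{n\Pi(z)}(U(z)+\del_n(z))$, $\Pi(0)=0$, and $U(0)=\mu(P_0 h_0)=\mu(h_0)=1$. For the decay, shrinking $V'$ further I can arrange $|e^{-\Pi(z)}|\leq e^\eta$ for any preassigned $\eta>0$, and then
$$
|\del_n(z)|\leq e^{n\eta}\|\mu\|_{B^*}\|h_0\|\cdot\|N(z)^n\|\leq C(\rho e^\eta)^n;
$$
choosing $\eta$ with $\del:=\rho e^\eta<1$ gives $|\del_n(z)|\leq C\del^n$. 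The positivity $\Pi''(0)>0$ then follows by differentiating $\log\bbE[e^{zS_n}]=n\Pi(z)+\log(U(z)+\del_n(z))$ twice at $z=0$ to obtain $\sig_n^2=n\Pi''(0)+O(1)$, and combining with the standing hypothesis $p_2=\sig^2=\lim_n\sig_n^2/n>0$.

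The main obstacle is the careful invocation of analytic perturbation theory with a uniform spectral gap: one must verify not merely that $1$ is a geometrically simple eigenvalue of $\cL$ but that it is in fact a simple pole of the resolvent, so that the rank-one eigenprojection deforms analytically and the remainder $N(z)^n$ decays exponentially uniformly in $z\in V$. In the present functional-analytic setup this is automatic from the explicit form $P_0 g=\mu(g)h_0$ together with quasi-compactness; once this spectral picture is recorded, the rest is standard bookkeeping and the analyticity of the three functions $\Pi,U,\del_n$ follows directly from that of $\la(z),P(z)$ and $N(z)$.
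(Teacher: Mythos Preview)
Your proposal is correct and follows essentially the same Nagaev--Guivarc'h analytic perturbation argument as the paper: both obtain the decomposition $\cL_z^n=\la(z)^n P(z)+N(z)^n$ with $\|N(z)^n\|\leq C\rho^n$ and then read off $\Pi$, $U$, $\del_n$ from $\mu(\cL_z^n h_0)$. Your treatment is in fact slightly more explicit than the paper's in two places---the uniform-in-$z$ exponential decay of $\del_n$ via the balance $\del=\rho e^\eta<1$, and the verification of $\Pi''(0)>0$ from the standing hypothesis $\sig^2>0$---but these are natural elaborations of the same proof rather than a different route.
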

 
\begin{proof}
By applying an analytic perturbation theorem we get that there is a number $r>0$ so that for every $z\in\bbC$ with $|z|<r$ there are analytic functions of $z\to \la(z)\in\bbC\setminus\{0\}$, $z\in h^{(z)}\in B$ and $z\to\nu^{(z)}\in B^*$ so that 
$$
\left\|\la(z)^{-n}\cL_z^n-\nu^{(z)}\otimes h^{(z)}\right\|\leq C\del^n
$$
where $C>0$, $\del\in(0,1)$ and the operator $\nu\otimes h$ is given by $g\to\nu(g)h$. Moreover, $\la(0)=1$, $\nu^{(0)}=\mu$ and $h^{(0)}=h_0$.
Thus,  
$$
\bbE[e^{zS_n}]=\mu(\cL_z^nh_0)=e^{n\ln\la(z)}\left(U(z)+\del_n(z)\right)
$$
where $U(z)=\nu^{(z)}(h_0)\mu(h^{(z)})$ (which indeed takes the value $1$ at $z=0$) and $\del_n(z)=O(\del^n)$ for some $\del\in(0,1)$. 
\end{proof}

  
  \begin{remark}
  Like in Remark \ref{DiffRem}, 
  in order to verify Assumption \ref{GrowAssum} with  a given $m$, it is enough to assume that the operators $t\to \cL_{it}$ are  $C^{m+1}$ in a (real) neighborhood of the origin. 
However, in the applications we have in mind the operators $z\to\cL_z$ will already be analytic.
   \end{remark}

\subsubsection{Application to expanding or hyperbolic dynamical systems}\label{SecDS}
In this section we apply the results of the previous section with several classes of chaotic dynamical systems. Again, the main innovation is that the non-uniform Berry-Esseen estimates as in Theorem \ref{BE2} hold. 

Let $(X,\cB,\mu)$ be a probability space and let $f:\cX\to\bbR$.  Let $T:\cX\to\cX$ be a measurable map  and set  
$$
U_j=T^j X_0
$$
where $X_0$ is a random element in $\cX$ whose distribution is $\mu$. Let $B$ be a Banach space of measurable functions on $\cX$ with the properties described in Section \ref{Sec5.2}.
Then the conditions of Proposition \ref{PrP} hold true for a variety of (non-uniformly) expanding maps and measures, some of which listed below:
\vskip0.1cm
\textbf{Subshifts of finite type and Anosov diffeomorphisms. }Recall that $(X,\cB,T)$ is a topologically mixing (one sided) subshift of finite type if there is a finite set $\mathcal A$ with cardinality larger than $2$, a matrix $A=(A_{i,j})_{i,j\in\mathcal A}$ with $0-1$  such that $A^M$ has only positive entries for some $M\in\mathbb N$,
$$
X=\{(x_j)_{j\geq0}: x_j\in\mathcal A, A_{x_{j},x_{j+1}}=1,\,\,\forall j\},
$$
$\cB$ is the restriction of the product topology and $T:X\to X$ is the left shift. Recall that (see \cite{Bowen}) for every $\alpha\in(0,1]$ and a H\"older continuous function $\varphi:X\to\mathbb R$ there exists a unique probability measure $\mu_{\varphi}$ on $X$ such that for every $x=(x_j)\in X$ and all $n\in\mathbb N$ we have 
$$
C^{-1}\lambda^ne^{\sum_{j=0}^{n-1}\varphi\circ T^j(x)}\leq \mu_\varphi([x_0,x_1,...,x_{n-1}])\leq C\lambda^ne^{\sum_{j=0}^{n-1}\varphi\circ T^j(x)}
$$
for some constants $C>1$ and $\lambda>0$ which depend only on $\varphi$. Here $[x_0,...,x_{n-1}]=\{(y_j)\in X: y_i=x_i, i\leq n-1\}$.  The measure $\mu=\mu_\varphi$ is called the Gibbs measure associated with $\varphi$.
In this setting we take $B$ to be the space of H\"older continuous functions, equipped with the norm 
$$
\|g\|=\sup|g|+v_\al(g)
$$
where $v_\al(g)$ is the H\"older constant of $g$ corresponding to the exponent $\al$. Then (see \cite{Bowen}) there is a strictly positive function $h$ with $\|h\|_\alpha<\infty$ such that the conditions in the previous section hold with $h_0=1$ and the operator $\mathcal L$ which maps a function $g:X\to\mathbb R$ to a function $\mathcal Lg$ given by
$$
\mathcal Lg(x)=\lambda^{-1}(h(x))^{-1}\sum_{y: Ty=x}e^{\varphi(y)}h(y)g(y). 
$$
We thus derive non-uniform Berry-Esseen estimate for partial sums of the form 
$$
S_n=\sum_{j=0}^{n-1}f\circ T^j
$$
under the assumption that  $\|f\|_\alpha<\infty$.

Next, we define two sided subshifts  of finite type $(\tilde X,\tilde\cB,\tilde T)$ similarly but with two sided sequences $(x_j)_{j\in\mathbb Z}$. Then by Sinai's lemma (see \cite{Bowen}) every H\"older continuous function on the two sided shift space differs from the natural lift of a  H\"older continuous function on the one sided shift by a term of the form $u-u\circ \tilde T$ with $u$ being H\"older continuous. Thus, Theorem \ref{BE2} for partial sums of the form 
$$
\tilde S_n=\sum_{j=0}^{n-1}\tilde f\circ \tilde T^j
$$
follow, under the assumption that  $\|\tilde f\|_\alpha<\infty$.
 This has applications to Anosov maps via symbolic representations, and we refer to \cite{Bowen} for the details. 
 
 Finally, note that in the above settings the optimal uniform Berry-Esseen theorem was proven in \cite{GH}, and the main novelty here is that we prove non-uniform Berry-Esseen estimates as in Theorem \ref{BE2}.
\vskip0.1cm
\textbf{Expanding maps on the unit interval and observables with bounded variation. }
Let $X=[0,1]$ and $\mathcal B=\text{Borel}$. Let $T:X\to X$ be a map for which there is a finite partition $\mathcal I$ (up to end points) of $[0,1]$ to closed intervals such that for each $I$ the map $T|I$ is of class $C^2$ and $\inf_{x\in I}|T'(x)|>1$. Let $v(f)$ be the usual variation of a function $f$ on $[0,1]$ and let $\|f\|=\sup|f|+v(f)$. Suppose next that for every interval $J\subset I$ there exists $M_I\in\mathbb N$ such that 
$$
T^{M_I}(I)=[0,1].
$$
Then (see \cite{KelLiv}) there exists a unique absolutely continuous invariant measure $\mu=h \,d\text{Leb}$, where $h$ satisfies $\|h\|<\infty$ and it is strictly positive.
Moreover, the assumptions of Proposition \ref{PrP} are met with the function $h_0=1$ and the operator
$$
\mathcal L g(x)=(h(x))^{-1}\sum_{y: Ty=x}\frac{1}{T'(y)}h(y)g(y).
$$
\vskip0.1cm

\textbf{Young towers and partially hyperbolic maps}
Let $(\Del_0,\cF_0,\nu_0)$ be a probability space, $\{\Del_0^j:\,j\geq1\}$ be a partition of $\Del_0$ (\text{mod} $\nu_0$), and $R:\Del_0\to\bbN$ be a (return time) function which is constant on each one of the $\Del_0^j$'s. Henceforth we asssume that $\gcd\{R(x): x\in\Delta_0\}=1$.
Next, we  identify each element $x$ in $\Del_0$ with the pair $(x,0)$, and
for each nonnegative integer $k$ let the $k$-th floor of the tower be defined by
\[
\Del_k=\{(x,k)\in\Del_0\times\{k\}:\,\,R(x)>k\}.
\]
For each $j$ so that $R|\Del_0^j>k$ set 
\[
\Del_k^j=\{(x,k)\in\Del_k:\,x\in\Del_0^j\}\subset\Del_k.
\]
The whole tower is defined by 
\[
X=\{(x,k):\,k\geq0,\,\,(x,k)\in\Del_k\}=\bigcup_{k\geq0}\Del_k.
\]
Let $f_0:\Del_0\to\Del_0$ be so that for each $j$ the map $f_0|\Del_0^j:\Del_0^j\to\Del_0$ is bijective (\text{mod} ${\nu_0}$). The dynamics on the tower is given by the map $T:X\to X$ defined by
\[
T(x,k)=\begin{cases}(x,k+1) & \text{ if }R(x)>k+1 \\(f_0(x),0)& \text{ if }R(x)=k+1\end{cases}.
\]
We think of $(f_0(x),0)$ as the return (to the base $\Del_0$) function corresponding to $F$, and when $R(x)=k+1$ we will also write $F^R(x,0):=F(x,k)=(f_0(x),0)$. It will also be convenient to set 
$F^R(x,k)=F^R(x,0)$ for any $k\geq1$ and $(x,k)\in\Del_k$.
We note that in applications usually $\Del_0$ is a subset of a larger set, and $f_0=f^R$ is the return time function (to $\Del_0$) of a different function $f$ (so that the tower is constructed in order to study statistical properties of $f$). We assume here that the partition $\cC=\{\Del_k^j\}$ is generating in the sense that 
\[
\bigvee_{i=0}^\infty T^{-i}\cC
\]
is a partition into points. For each $k\geq1$ and $x\in X$, we will denote the element of the partition 
\[
\cC_k=\bigvee_{i=0}^{k-1} T^{-i}\cC
\]
containing $x$ by $\cC_k(x)$ (so that $\{x\}=\cap_{k\geq0}\cC_k(x)$). The partition elements of $\cC_k$ are called cylinders of length $k$.

Next, we lift the $\sig$-algebra $\cF_0$ to $X$ by identifying $\Del_k^j$ with $\Del_0^j$ and lift the  probability measure $\nu_0$ to a measure on $X$, by assigning the mass $\nu_0(\Gam)$ to each subset $\Gamma$ of each $\Del_k^j$, for any $k$ and $j$ so that $R|\Del_0^j>k$. Let us denote the above $\sig$-algebra and measure on $X$  by $\cF$ and $m$, respectively.


The uniform distance $d_U$ on the space $X$ is defined as follows: for 
every $x$ and $y$ in $X$, we denote by $s_U(x,y)$ the greatest positive integer $n$ so that $T^px$ and $T^p y$ belong the same element of the partition $\{\Del_k^j\}$, for all $p<n$ (namely, they  belong to the same partition element in $\cC_{n}$  but not to the same element of $\cC_{n+1}$).  When $x=y$ we set $\beta(x,y)=\infty$.
Let $\be\in(0,1)$ and define a metric $d_U(\cdot,\cdot)$ on $X\times X$ by $d_U(x,y)=\beta^{s_U(x,y)}$ (where $\beta^\infty:=0$). We assume that for every $k,j$,
\[
T^R:\Del_k^j\to\Del_0
\]
and its inverse are both non-singular with respect to $m$, and   the (inverse) Jacobian $JT^R$ is logarithmically locally Lipschitz continuous in the sense that for all $k$ and $x,y\in\Del_k^j$,
\begin{equation}\label{Jack Reg1}
\left|\frac{JT^R(x)}{JT^R(y)}-1\right|\leq Cd_U(T^Rx,T^Ry)
\end{equation}
for some constant $C$ which does not depend on $k,j,x$ and $y$.
We remark that uniform towers arise as extensions for certain classes of partially hyperbolic diffeomorphisms after collapsing along stable manifolds, see \cite{Y1}. 

Then (see \cite{Y1}), in the above setting there is a unique   absolutely continuous (with respect to $m$) $F$-invariant probability measure $\mu=h\,dm$ with $h$ being strictly positive
and a Banach space $(B,\|\cdot\|)$ of  weighted H\"older continuous functions on $X$ such that  
$\|h\|<\infty$ and
the conditions  of Proposition \ref{PrP} hold with $h_0=1$ and the operator
\begin{equation}\label{P def}
\mathcal Lg(x)=(h(x))^{-1}\sum_{y\in T^{-1}\{x\}}\frac{g(y)h(y)}{J T(y)}
\end{equation}
where $JF=\frac{dT_*m}{dm}$.

This means that we get limit theorems for functions $f$ belonging to the space $B$. Next, invertible towers are defined similarly and Theorem \ref{BE2} for them is obtained by a version of Sinai's lemma (see \cite{Y1}) in that setup. Finally, note that Young towers have a variety of applications to partially hyperbolic or expanding maps, and we refer to \cite{Y1} for the details (see also \cite{MelNic} for a related setup).

\subsection{Application to products of random matrices}\label{SecMat}
Let $V$ be a $d$ dimensional vector space for some $d>1$. Let us fix some scalar product on $V$ and denote by $\|\cdot\|$ the corresponding norm. Let us denote by $X=\bbP V$ the projective space on $V$, equipped with a suitable Remannian distance $d(\cdot,\cdot)$ (see \cite[Chapter II]{BoLac}). Given $x\in V$ and a sequence $(g_n)$ of iid random variables which takes values on $G=GL(V)$, we define 
$$
S_n(x)=\log\frac{\|g_n\cdots g_1 x\|}{\|x\|}.
$$
Then, as usual, $S_n(x)$ can be represented as the ergodic sum of $\bar\phi(g,x)=\frac{\log \|gx\|}{\|x\|}$ (see \cite{HH}). 

Let $\mu$ be the common distribution of all $g_n$. Let us consider the following assumptions.
\begin{assumption}\label{Ass1 mat}[Exponential moments]
For some $\del>0$ we have 
$$
\int_G\max\left(\|g\|,\|g^{-1}\|\right)^\del d\mu(g)<\infty.
$$
\end{assumption}
and

\begin{assumption}\label{Ass2 mat}[Strongly irreducibility and proximal elements]
The semi group generated by the support $\Gamma_\mu$  of $\mu$ has the property that there is no finite union of proper subspaces which is $\Gamma_\mu$-invariant. Moreover, there exits $g\in \Gamma_\mu$ which has a simple dominant eigenvalue. 
\end{assumption}

As will be explained in what follows, using the arguments of  \cite{Guiv}, under the above two assumptions the following limits exist
$$
\la_1=\lim_{n\to\infty}\frac1n\bbE[\log\|g_n\cdots g_1\|]
$$
and 
$$
\sig^2=\lim_{n\to\infty}\frac1n\bbE[\left(\log\|g_n\cdots g_1\|-n\la_1\right)^2].
$$
Recall also that for every $x\in\bbP V$ we have 
\begin{equation}\label{ee}
\la_1=\lim_{n\to\infty}\frac1n S_n(x), \text{a.s.}
\end{equation}
and that the above limit is also in $L^1$, uniformly in $x$. In particular,
$$
\la_1=\lim_{n\to\infty}\frac1n\bbE[S_n(x)].
$$
Moreover, for every $x\in\bbP V$ we have
\begin{equation}\label{vv}
\sig^2=\lim_{n\to\infty}\frac1n\bbE[\left(S_n(x)-n\la_1\right)^2].
\end{equation}
The latter  actually also follows from the CLT for $n^{-1/2}(S_n(x)-\la_1 n)$ (see \cite{FP}) together with the existence of the limit $\lim_{n\to\infty}\frac1n\bbE[\left(S_n(x)-n\la_1\right)^2]$ which will be proven in the proceeding arguments. 
Finally, note also that, for more general potentials $\sig^2=0$ if and only if the potential $\bar\phi(g,x)$ admits an appropriate coboundary representation. For the above choice of $\bar\phi$ we have that $\sig^2>0$ (see \cite{HH, Guiv}).

\begin{proposition}\label{MatProp}
Under Assumptions \ref{Ass1 mat} and \ref{Ass2 mat} Assumption \ref{GrowAssum} is valid for all $m$.  Moreover,  we have $B_n:=\sig \sqrt n=\sig_n+O(1)$ and $A_n:=\la_1 n-\bbE[S_n]=O(1)$. Thus the sequence
$\frac{S_n(x)-n\la_1}{\sig\sqrt n}$  obeys the non-uniform Berry-Esseen theorem with all powers $m$, as in Theorem \ref{BE2}.
\end{proposition}

\begin{proof}
Let $\|\cdot\|_\ve$, $\ve\in(0,1]$ denote the usual H\"older norm  of functions on $X$ corresponding the the exponent $\ve$. 
For every sufficiently small complex $z$ let us define the linear operator
$$
\cL_zh(x)=\int_G e^{z(\bar\phi(g,x)-\la_1)}h(g\cdot x)d\mu(g).
$$
where $h$ is a bounded function. 
Since $\|g^{-1}\|\|x\|\leq \|gx\|\leq \|g\|\|x\|$ Assumption \ref{Ass1 mat} implies that there is a constant $\del_0>0$ so that if $|z|\leq \del_0$ then $\cL_z$ is well defined and acts continuously on the space of H\"older continuous functions (with respect to an arbitrary exponent $\ve$). Moreover, the map $z\to\cL_z$ is analytic. Now (see \cite{HH, Guiv}), under Assumptions \ref{Ass1 mat} and \ref{Ass2 mat} the operator $\cL_0$ is quasi-compact, its spectral radius is $1$, and the constant function $\textbf{1}$ is the unique eigenfunction with eigenvalue of modulus $1$ (note that $\cL_0\textbf{1}=\textbf{1}$). Now, using an analytic perturbation theorem (which follows from an analytic inverse function theorem for Banach space) we see that 
$$\cL_z^n=(\la(z))^n(\Pi_z+r_{z,n})$$ 
where $z\to\la_z$ is an analytic complex valued function, and $z\to\Pi_z$ and $z\to r_{z,n}$ are analytic operator-valued functions 
so that $\la(0)=1$, $\Pi_0(g)=\nu(g)\textbf{1}$ for some probability measure $\nu$, $|r_{z,n}|\leq Cr^n$ for some $r\in(0,1)$ and $C>0$. Moreover, $\Pi_z$ has the form $\Pi_z(g)=\nu^{(z)}(g)h^{(z)}$ for some analytic in $z$ element of the dual space of the space of H\"older continuous functions, and $h^{(z)}$ is a complex valued H\"older continuous function, with the map $z\to h^{(z)}$ being analytic and $h^{(0)}=\textbf{1}$.
 
Next, note that 
$$
\bbE[\exp(z(S_n(x)-n\la_1))\xi(g_n\cdots g_1x)]=\cL_z^n\xi(x).
$$
for every function $\xi$. Taking $\xi=\textbf{1}$ we get that, in a complex neighborhood of the origin we have 
\begin{equation}\label{BothSides}
\bbE[\exp(z(S_n(x)-\la_1 n))]=\cL_z^n\textbf{1}(x)=\la_z^n(U(z)+O(r^n))
\end{equation}
where $U(z)=\Pi_z(\textbf{1})=\nu^{(z)}(\textbf{1})h^{(z)}(x)$, $U(0)=1$. Since $\la(0)=1$ we can write
 $\la(z)=e^{\Pi(z)}$, where $\Pi(z)$ is an analytic function so that $\Pi(0)=0$. 
 
Next, let us show that Assumption \ref{GrowAssum} holds true for all $m$.
First, by taking the logarithms of both sides of \eqref{BothSides} and differentiating $k$ times for $k\geq1$ (using analyticity and the Cauchy integral formula) we see that 
$$
\bbE[S_n(x)]=n(\Pi'(0)+\la_1)+H'(0)+O(r^n),
$$
where $H(z)=\ln U(z)$, and
$$
\text{Var}(S_n(x))=\Pi''(0)n+H''(0)+O(r^n).
$$
By dividing both sides of the first equality by $n$, taking $n\to\infty$ and using \eqref{ee} we see that
$$
\Pi'(0)=0.
$$ 
and so
\begin{equation}\label{Exppp}
\bbE[S_n(x)]=n\la_1+O(1).
\end{equation}
Next, by dividing both sides of the second quality by $n$, taking $n\to\infty$ and using \eqref{vv} and \eqref{Exppp} we see that
$$
\Pi''(0)=\sig^2
$$
and so
$$
\text{Var}(S_n(x))=\sig^2n+O(1).
$$
Set $S_n=S_n(x)-\bbE[S_n]$. The above estimates show that the choices $B_n=\sig\sqrt n=\text{Var}(S_n)+O(1)$, and $A_n=n\la_1-\bbE[S_n]=O(1)$ fit our general framework described in Section \ref{Main}.

Finally,  by taking the logarithms of both sides of \eqref{BothSides} and differentiating both sides $j$ times for $j\geq3$ and taking into account that $\sig_n^2=\text{Var}(S_n(x))\asymp \sig^2 n$ 
we see that Assumption \ref{GrowAssum} is in force with all $m$.

\end{proof}



\section{Examples: Nonstationary dynamical systems and Markov chains}\label{NonStat}
In this section we will provide a few examples for which Assumption \ref{GrowAssum} is known to hold (yet, we provide most of the details for readers' convenience). In all these example the uniform Berry-Esseen estimates are known, and our main novelty is that we obtain the non-uniform estimates like in Theorem \ref{BE2}. 

\subsection{Inhomogenous uniformly elliptic Markov chains}\label{MC IH}
Let $(\cX_i,\cF_i),\,i\geq1$ be a sequence of measurable spaces.
For each $i$,  let $R_i(x,dy),\,x\in\cX_i$ be a measurable family of (transition) probability measures on $\cX_{i+1}$. Let $\mu_1$ be any probability measure on $\cX_1$, and let $X_1$ be an $\cX_1$-valued random variable with distribution $\mu_1$. Let $\{X_j\}$ be the Markov 
 starting from
 $X_1$ with the transition probabilities
\[
\bbP(X_{j+1}\in A|X_{j}=x)=R_{j}(x,A),
\] 
where $x\in\cX_j$ and $A\subset\cX_{j+1}$ is a measurable set.
Each $R_j$ also gives rise to a transition operator given by 
\[
R_j g(x)=\bbE[g(X_{j+1})|X_j=x]=\int g(y)R_j(x,dy)
\] 
which maps an integrable function $g$ on $\cX_{j+1}$ to an integrbale function on $\cX_j$ (the integrability is with respect to the laws of $X_{j+1}$ and $X_j$, respectively). We assume here that 
there are probability measures $\fm_j$,
$j>1$ on $\cX_j$ and families of transition probabilities $p_j(x,y)$ 
so that 
\[
R_j g(x)=\int g(y)p_j(x,y)d\fm_{j+1}(y).
\]
Moreover, there exists $\ve_0>0$ so that for any $j$ we have 
\begin{equation}
\label{DUpper}
 \sup_{x,y}p_j(x,y)\leq 1/\ve_0,
 \end{equation}
  and the transition probabilities of the second step
   transition operators $R_j\circ R_{j+1}$ of $X_{j+2}$ given $X_j$ are bounded from below by $\ve_0$ (this is the uniform ellipticity condition): 
\begin{equation}
\label{DLower}
\inf_{j\geq1}\inf_{x,z}\int p_j(x,y)p_{j+1}(y,z)d\fm_{j+1}(y)\geq \ve_0.
\end{equation}
Next, for a uniformly bounded sequence of measurable functions $f_j:\cX_j\times\cX_{j+1}\to\bbR$ 
we set $Y_j=f_j(X_j,X_{j+1})$ and
\begin{equation}
\label{AddFunct}
S_n=\sum_{j=1}^n(Y_j-\bbE(Y_j)).
\end{equation}
Set $V_n=\text{Var}(S_n)$ and $\sig_n=\sqrt{V_n}$. Then by \cite[Theorem 2.2]{DS} we have 
$\DS \lim_{n\to\infty}V_n=\infty$ if and only if one can not decompose $Y_j$ as
$$ Y_j=\bbE(Y_j)+a_{j+1}(X_{j+1},X_{j+2})-a_{j}(X_j, X_{j+1})+g_n(X_j, X_{j+1})$$ 
 where  $a_n$ are uniformly bounded functions and $\DS \sum_j g_j(X_j, X_{j+1})$ converges almost surely.
 Note that when the chain is one step elliptic (i.e. $\ve_0\leq p_j(x,y)\leq \ve_0^{-1}$) and $f_j(x,y)=f_j(x)$ depends only on the first variable then by \cite[Proposition 13]{PelPTRF} we have 
$$
C_1\sum_{j=1}^{n}\text{Var}(f_j(X_j))\leq \sig_n^2\leq C_2\sum_{j=1}^{n}\text{Var}(f_j(X_j))
$$  
for some constants $C_1,C_2>0$, and so $\sig_n\to\infty$ iff the series $\sum_{j=1}^{\infty}\text{Var}(f_j(X_j))$ diverges.


Next, in \cite[Proposition 24]{DolgHaf} we have shown that Assumption \ref{GrowAssum} holds true for all $m$. Thus we get the following result.

\begin{theorem}
The Berry-Esseen type theorem \ref{BE2} and holds true for every $m$. Moreover, the expectation estimates in Corollary \ref{Cor} hold true. 
\end{theorem}

\begin{remark}
As noted in Remark \ref{Add}, when $\sig_n^2$ grows linearly fast we get Theorem \ref{BE2} for more general chains ($\phi$-mixing). However, in this section we are interested in the situation when $\sig_n^2$ can grow arbitrary slow.
\end{remark}

\subsection{Random (partially expanding or hyperbolic) dynamical systems}\label{RDS}

Let $(\Om,\cF,\bbP)$ be a probability space and let $\theta:\Om\to\Om$ be an ergodic invertible map.
Let $(\cX,d)$ be a metric space and let $\cE_\om, \om\in\Om$ be a measureable family of compact subsets (see \cite[Ch. 6]{HK}). Set $\cE=\{(\om,x): \om\in\Om, x\in\cE_\om\}$ and let $T:\cE\to\cE$ be a measurable map of the form
$$
T(\om,x)=(\theta\om, T_\om x)
$$
where
$T_{\om}:\cE_\om\to\cE_{\theta\om}$ is a random family of maps ($T$ is the so-called skew product induced by $\{T_\om\}$). Let $\mu$ be a $T$-invariant probability measure on $\cE$, and let us represent it in the form 
$$
\mu=\int \mu_\om d\bbP(\om)
$$ 
where for $\bbP$-almost every $\om$ the probability measure $\mu_\om$ on $\cE_\om$ satisfies $(T_\om)_*\mu_\om=\mu_{\te\om}$. 

Next, let $\cL_\om$ denote the dual of $T_\om$ with respect to the measures $\mu_\om$ and $\mu_{\theta\om}$, namely the unique operator so that 
$$
\int g\cdot(f\circ T_\om)d\mu_\om=\int f\cdot(\cL_\om g)d\mu_{\te\om}
$$
for all bounded functions $g$ and $f$ on the appropriate domains.
Let $(B_\om,\|\cdot\|_\om)$ be a norm on functions on $\cE_\om$ and let $f:\cE\to\bbR$ be a measurable function so that $\|f\|:=\text{ess-sup}\|f(\om,\cdot)\|_\om<\infty$. In what follows we will introduce assumptions on $\cL_\om$ and their complex perturbations which will guarantee that Assumption \ref{GrowAssum} holds true for 
$$
S_n^\om=\sum_{j=0}^{n-1}f_{\te^j\om}\circ T_{\te^{j-1}\om}\circ\cdots\circ T_{\om},\,\,\,f_\om(\cdot)=f(\om,\cdot)
$$
for $\bbP$-a.e. $\om$,  when considered as a random variables on the space $(\cE_\om,\mu_\om)$.

For a complex parameter $z$, let us consider the operator $\cL_\om^{z}$ given by $\cL_\om^{z}(g)=\cL_{\om}(ge^{zf_\om})$. For it to be well defined, we  assume that $\cL_\om$ is a continuous operator between $B_\om$ and $B_{\te\om}$ and that the map $z\to e^{zf_\om}\in\cB_\om$ is analytic in $z$, uniformly in 
$\om$.

\begin{assumption}\label{AssRDF}
There is a constant $r_0>0$ so that ($\bbP$-a.s.) for every complex $z$ with $|z|<r_0$ there is a triplet $\la_\om(z)\in\bbC\setminus\{0\}$, $h_\om^{(z)}\in B_\om$ and $\nu_\om^{(z)}\in B_\om^*$ which is measurable in $\om$, analytic in $z$ and: 
\begin{enumerate}
\item $\la_\om(0)=1$, $h_\om^{(0)}=\textbf{1}$, $\nu_\om^{(0)}=\mu_\om$, $\nu_\om^{(z)}(h_\om^{(z)})=1$;
\vskip0.1cm
\item with 
$$\cL_{\om}^{z,n}=\cL_{\te^{n-1}\om}^{z}\circ\cdots\circ\cL_{\om}^{z}$$
and 
$$
\la_{\om,n}(z)=\la_{\te^{n-1}\om}(z)\cdots\la_{\om}(z)
$$
there are $C>0$ and $\del\in(0,1)$ so that
$$
\left\|(\la_{\om,n}(z))^{-1}\cL_{\om}^{z,n}-\nu_{\om}^{(z)}\otimes h_{\te^n\om}^{(z)}\right\|_{\te^n\om}\leq C\del^n
$$
where the operator $\nu\otimes h$ is given by $g\to \nu(g)\cdot h$.
\end{enumerate}
\end{assumption}

\begin{example}
Let us list a few types of random maps $T_\om$ and measures $\mu_\om$ for which Assumption \ref{AssRDF} holds true.
\begin{enumerate}
\item The random expanding maps  considered in \cite[Ch. 6]{HK}, where $\|\cdot\|_\om$ is a H\"older norm of a function on $\cE_\om$ with respect to some exponent $\al\in(0,1]$ and $\mu_\om$ is a random Gibbs measure;
\vskip0.1cm
\item The random expanding maps  considered in \cite{DavorCMP}, where $\|\cdot\|_\om$ is the bounded-variation norm and $\mu_\om$ is the unique random  absolutely continuous equivariant measure.
\vskip0.1cm
\item The hyperbolic maps considered in \cite{DavorTAMS} (see also \cite{DDH}), with $\|\cdot\|_\om$ being the appropriate strong norm and $\mu_\om$ being the unique random fiberwise absolutely continuous equivariant measure.
\vskip0.1cm
\item The random partially expanding or hyperbolic maps considered in \cite{HafYT}, with $\|\cdot\|_\om$ being the appropriate ``weighted" H\"older norm and $\mu_\om$ being a sampling measure. 
\vskip0.1cm
\item The uniformly random version (as in \cite{RPF2022}) of the random partially expanding maps considered in \cite{Varandas}  with $\|\cdot\|_\om$ being a H\"older norm with respect to some exponent $\al\in(0,1]$ and $\mu_\om$ being a random Gibbs measure corresponding to a potential with a sufficiently small oscillation (see \cite{RPF2022}). 
\end{enumerate}
\end{example}

\begin{remark}
We note that in \cite{DavorCMP, DavorTAMS} Assumption \ref{AssRDF} appears as 
$$
\left\|\cL_{\om}^{z,n}-\la_{\om,n}(z)\left(\nu_{\om}^{(z)}\otimes h_{\te^n\om}^{(z)}\right)\right\|_{\te^n\om}\leq C\del^n
$$
but since $\la_\om(z)=1+O(z)$ upon decreasing $r_0$ we get that  $(1-\ve)^n\leq |\la_{\om,n}(z)|\leq (1+\ve)^n$ for an arbitrary small $\ve$, and hence the above two forms are equivalent. 
\end{remark}

\begin{proposition}\label{FirstProp}
Under assumption \ref{AssRDF} we we have tha following.

(i) There is a nonnegative number $\sig$ so that $\bbP$-a.s. 
$$
\sig^2=\lim_{n\to\infty}n^{-1}\text{Var}_{\mu_\om}(S_n^\om).
$$
 Moreover, $\sig^2=0$ iff $f(\om,x)-\int f(\om,y)d\mu_\om(y)=r(\om,x)-r(\te\om, T_\om x)$ for some function $r$ so that $\int|r(\om,y)|^2d\mu_\om(y)d\bbP(\om)<\infty$.

(ii) Suppose that $\sig^2>0$. Then, under  Assumption \ref{AssRDF}, for $\bbP$-a.e. $\om$ we have that
 $S_n=S_n^\om-\mu_\om(S_n^\om)$ verifies Assumption \ref{GrowAssum} with every $m$.  Thus, all the results stated in Theorem \ref{BE2} and Corollary \ref{Cor}  hold true for all $m$.
\end{proposition}
\begin{proof}
We have 
$$
\bbE[e^{zS_n^\om}]=\mu_{\te^n\om}(\cL_\om^{z,n}\textbf{1}).
$$
Now, like in the stationary case, we have 
\begin{equation}\label{Ab}
\ln \bbE[e^{zS_n^\om}]=\Pi_{\om,n}(z)+H_{\om,n}(z) +O(|z|\del^n).
\end{equation}
where\footnote{$\ln\la_\om(z)$ is an analytic branch which vanishes at the origin and is uniformly bounded around the origin.} $\Pi_{\om,n}(z)=\sum_{j=0}^{n-1}\ln\la_{\te^j\om}(z)$ and $H_{\om,n}(z)$ is bounded by some constant $C$ and it vanishes at $z=0$. Now, by differentiating both sides of \eqref{Ab},  using the Cauchy integral formula, and plugging in $z=0$ we see that 
$$
\sig_{\om,n}^2=\text{Var}_{\mu_\om}(S_n^\om)=\Pi_{\om,n}''(0)+O(1)=\sum_{j=0}^{n-1}\Pi_{\te^j\om}''(0)+O(1).
$$
Next, by ergodicity of $(\Om,\cF,\bbP,\te)$ we conclude that, $\bbP$-a.s. we have 
$$
\lim_{n\to\infty}\frac1n\sig_{\om,n}^2=\int_{\Om}\Pi_\om''(0)d\bbP(\om):=\sig^2.
$$
The proof of the characterization of positivity of $\sig^2$  proceeds similarly to \cite[Section 3]{Kifer1998}.

Finally, in order to show that Assumption \ref{GrowAssum} is satisfied for all $m$ we  differentiate both sides of \eqref{Ab}\, $j$ times and then use the Cauchy integral formula to conclude that for all $j\geq 3$ there are constants $r_j,C_j>0$ so that ($\bbP$-a.s.) on the domain $\{|z|\leq r_j\}$ we have
$$
\tilde\Lambda_n^{(j)}(z)\leq C_j n,\,\,\,\tilde\Lambda_n(z)=\tilde\Lambda_{\om,n}(z)=\ln \bbE[e^{zS_n^\om}]
$$
where we have used that $\ln\la_{\te^j\om}(z)$ and $H_{\om,n}(z)$ and all their derivatives are uniformly bounded in $z$, $\om,j$ and $n$ (when $|z|$ is small enough).
To complete the proof that Assumption \ref{GrowAssum} is satisfied we use that 
 $\sig_{\om,n}^2$ grows linearly fast in $n$ 
\end{proof}

\subsection{Sequential dynamical systems in a neighborhood of a single system}\label{SDS sec}
Let $(\cX_j,\cB_j,\mu_j)$ be a sequence of probability  space, and let $T_j:\cX_j\to\cX_{j+1}$ be a sequence of measurable maps. Let $\cL_j$ an operator which is defined by the duality relation:
$$
\int_{\cX_j}g\cdot (f\circ T_j)d\mu_{j}=\int (\cL_jg)\cdot fd\mu_{j+1}
$$    
for all bounded functions $g:\cX_{j}\to\bbR$ and $f:\cX_{j+1}\to\bbR$. Let $B_j$ be a Banach space of measurable functions on $\cX_j$, equipped with a norm
$\|\cdot\|_j$ so that $\sup|g|\leq C\|g\|_j$ for some constant $C$ which does not depend on $j$. We assume here that there are constants $A$ and $\del\in(0,1)$ and strictly positive functions $h_j\in B_j$ so that for every $g\in B_j$ and all $n\geq1$ we have
$$
\|\cL_{j+n-1}\circ\cdots\circ\cL_{j+1}\circ\cL_j g-\mu_j(g)h_{j+n}\|_{j+n}\leq A\|g\|_j\del^n.
$$
Let take now a sequence of real-valued functions $f_j\in B_j$ so that $\|f\|:=\sup_j\|f\|_j<\infty$, $\mu_j(f_j)=0$ and define 
$$
S_n=\sum_{j=0}^{n-1}f_j\circ T_{j-1}\circ\cdots\circ T_1\circ T_0(x_0)
$$
where $x_0$ is distributed according to $\mu_0$. Let us define $\cL_j^{(z)}(g)=\cL_j(e^{zf_j})$, where $z\in\bbC$. We further assume here that the map $z\to\cL_j^{(z)}$ is analytic around the complex origin (with values in the space of bounded linear operators between $B_j$ and $B_{j+1}$), uniformly in $j$.   
\begin{example}
\,
\begin{itemize}
\item[(i)] These conditions hold true in the setup of \cite{HaNonl}, where the maps $T_j$ are locally expanding, the space $B_j$ is the space of H\"older continuous functions and $\mu_j$ are measures so that $(T_j)_*\mu_j=\mu_{j+1}$. Note that when the maps are absolutely continuous with respect to some reference measure (e.g. a volume measure) then $\mu_j$ is equivalent to that measure. 
\vskip0.1cm
\item[(ii)] 
These conditions hold true in the setup of \cite{ConzeR}, where $\cX_j=\cX$ and $\mu_j=\mu$ coincide with the same manifold and (normalized) volume measure, respectively, each $T_j$ is a locally expanding map and $B_j=B$ is the space of functions with bounded variation.
\end{itemize}
\end{example}

Next, by applying  a sequential perturbation theorem based on \cite{Frank} or \cite[Theorem D.2]{DolgHaf PTRF 2}
 (or, in the setup of \cite{HaNonl} using complex cones contractions) we see that there is a constant $r_0$ so that for every complex parameter $z$ with $|z|\leq r_0$ there are  analytic in $z$ triplets $(\la_j(z),h_j^{(z)},\nu_j^{(z)})$ consisting of a complex non-zero random variable $\la_j(z)$, a complex-valued function $h_j^{(z)}\in B_j$ and a complex continuous linear functional $\nu_j^{(z)}\in B_j^*$ so that $\la_j(0)=1$, $h_j^{(0)}=h_j$ and $\nu_j^{(0)}=\mu_j$, $\nu_j^{(z)}(h_j^{(z)})=\nu_j^{(z)}(\textbf{1})=1$. Moreover, there are constants $A_1>0$ and $\del_1\in(0,1)$ so that for all $j,n$ and a function $g\in 	B_j$ we have
$$
\left\|\frac{\cL_{j+n-1}\circ\cdots\circ\cL_{j+1}\circ\cL_j g}{\la_{j+n-1}(z)\cdots \la_{j+1}(z)\la_j(z)}-\nu_j^{(z)}(g)h_{j+n}^{(z)}\right\|\leq A_1\|g\|\del_1^n.
$$ 
The following result follows exactly like the corresponding result about random dynamical systems in the previous section.

\begin{theorem}
Suppose that $\text{Var}_{\mu_0}(S_n)\geq c_0 n$ for some $c_0$ and all $n$ large enough. Then Assumption \ref{GrowAssum} is in force, and so Theorem \ref{BE2} and Corollary \ref{Cor} hold true.
\end{theorem}

\subsubsection{Linearly growing variances: the perturbative approach}
Let us suppose that $\cX_j=\cX$ and $B_j=B$  coincide with a single space $\cX$ and a Banach space, respectively. Let $T:\cX\to\cX$ be a map so that all the previous properties are satisfied with the sequence $T_j=T$, and let $f\in B$. Then the corresponding triplet $\la_{j,T}(z)=\la(z;T), h_{j,T}^{(z)}=h_T^{(z)}$ and $\nu_{j,T}^{(z)}=\nu^{(z)}_T$ does not depend on $j$. 
Next, let us assume that $f$ is not an $L^2(\mu_T)$ coboundary with respect to $T$, where $\mu_T=\nu_T^{(0)}$. Then, with $S_nf=\sum_{j=0}^{n-1}f\circ T^j$ we have 
$$
\sig^2=\lim_{n\to\infty}\frac1n\text{Var}_{\mu_T}(S_n f)>0.
$$
Let us denote the dual operator corresponding to $T$ by $\cL_T$.

\begin{proposition}\label{ppp}
Set 
$$
\ve_0=\sup_{j}\max\left(\|\cL_j-\cL_T\|, \|f-f_j\|\right).
$$
If $\ve_0$ is small enough then for all $n$ large enough we have $\sig_n^2\geq \frac12\sig^2$.
\end{proposition}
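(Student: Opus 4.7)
The plan is to express $\sig_n^2$ as $n$ times an averaged ``per-step asymptotic variance'' plus a uniformly bounded error, and then argue that each per-step variance is close to $\sig^2$ by an analytic perturbation argument. I will interpret the conclusion as $\sig_n^2 \geq \tfrac12 \sig^2 n$ for $n$ large, which is the natural bound in this context.

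First I would use the duality that defines the $\cL_j$ to write, for $z$ in a complex neighborhood $U$ of the origin,
$$
\bbE[e^{zS_n}] = \mu_n\!\bigl(\cL_{n-1}^{(z)}\circ\cdots\circ\cL_0^{(z)}\textbf{1}\bigr),
$$
and then plug in the sequential perturbation expansion recalled right before the proposition. Using $\nu_0^{(z)}(\textbf{1})=1$ this gives
$$
\bbE[e^{zS_n}] = \Big(\prod_{j=0}^{n-1}\la_j(z)\Big)\bigl(\mu_n(h_n^{(z)}) + r_n(z)\bigr),
$$
where $r_n(z) = O(\del_1^n)$ uniformly on a slightly smaller neighborhood $U'\subset U$. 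Taking a logarithm of both sides, with $\Pi_j(z) := \ln \la_j(z)$ (an analytic branch vanishing at $0$), yields
$$
\Gamma_n(z) := \ln\bbE[e^{zS_n}] = \sum_{j=0}^{n-1}\Pi_j(z) + H_n(z),
$$
where $H_n$ is analytic on $U'$ and uniformly bounded in $n$. The Cauchy integral formula then bounds $|H_n''(0)|$ by a constant independent of $n$, while the hypothesis $\mu_j(f_j)=0$ gives $\Pi_j'(0)=0$, so differentiating twice at $z=0$ produces
$$
\sig_n^2 = \Gamma_n''(0) = \sum_{j=0}^{n-1}\Pi_j''(0) + O(1).
$$

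Second, I would show that $|\Pi_j''(0) - \sig^2| \leq C\ve_0$ uniformly in $j$. Since $\cL_j^{(z)}g = \cL_j(e^{zf_j}g)$, the analyticity of $z\mapsto e^{zf_j}$ together with the definition of $\ve_0$ gives, on a fixed disk $|z|\leq r_0$,
$$
\|\cL_j^{(z)} - \cL_T^{(z)}\| \leq C_1\bigl(\|\cL_j-\cL_T\| + \|f_j-f\|\bigr) \leq 2C_1\ve_0,
$$
uniformly in $j$ and $z$. Because $\cL_T^{(z)}$ has a simple dominant eigenvalue $\la_T(z)$ with a uniform spectral gap for $z$ near $0$, the standard Kato analytic perturbation machinery applies uniformly to the whole family $\{\cL_j^{(z)}\}_j$, giving $|\la_j(z) - \la_T(z)|\leq C_2\ve_0$ on a common disk $|z|\leq r_1$. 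A second application of Cauchy's integral formula on the circle $|z|=r_1/2$ then yields $|\Pi_j''(0) - \Pi_T''(0)|\leq C_3\ve_0$, and $\Pi_T''(0)=\sig^2$ is exactly the asymptotic variance of the stationary system.

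Combining the two steps, $\sig_n^2 \geq n(\sig^2 - C_3\ve_0) - C_4$. Choosing $\ve_0 \leq \sig^2/(4C_3)$ and $n$ large enough that $C_4 \leq \tfrac14 n\sig^2$, we obtain $\sig_n^2 \geq \tfrac12 n\sig^2$, which a fortiori gives $\sig_n^2 \geq \tfrac12\sig^2$ as stated. The main obstacle is step two: making sure the analytic perturbation construction producing $(\la_j,h_j^{(z)},\nu_j^{(z)})$ is quantitatively stable in $(\cL_j,f_j)$ with constants independent of $j$. This rests on the uniform spectral gap of $\cL_T^{(z)}$ for $z$ near $0$, which ensures that the spectral projector of each $\cL_j^{(z)}$ is a Lipschitz function of the operator in a neighborhood of $\cL_T^{(z)}$, and hence that the leading eigenvalue $\la_j(z)$ inherits the same Lipschitz dependence needed to conclude.
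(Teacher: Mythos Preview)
Your argument is correct and follows essentially the same route as the paper: both proofs use the sequential spectral representation to write $\sig_n^2=\sum_{j=0}^{n-1}\Pi_j''(0)+O(1)$, then establish $|\la_j(z)-\la_T(z)|\leq \del(\ve_0)$ uniformly in $j$ and $z$, and finally pass to $|\Pi_j''(0)-\sig^2|$ via Cauchy's formula to conclude $\sig_n^2\geq \tfrac12\sig^2 n$ (the paper phrases the last step as $|\sig_n^2-\text{Var}_{\mu_T}(S_nf)|\leq n\del_1(\ve_0)$, which is the same thing). One small caveat: the $\la_j(z)$ here are the \emph{sequential} RPF data, not eigenvalues of the individual operators $\cL_j^{(z)}$, so invoking ``standard Kato perturbation on each $\cL_j^{(z)}$'' is not quite the right mechanism; the closeness $|\la_j(z)-\la_T(z)|\leq \del(\ve_0)$ comes instead from the stability of the sequential construction (the paper cites \cite{HaNonl} or a sequential inverse function theorem for this), though the conclusion you need is exactly the one the paper uses.
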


\begin{proof}
First, as was shown in \cite{HaNonl} (or using a sequential inverse function theorem in the more general setup), we have that 
$$
\sup_{j,|z|\leq r_1}\max\left(|\la_j(z)-\la_T(z)|, \|h_j^{(z)}-h_T^{(z)}\|, \|\nu_j^{(z)}-\nu_T^{(z)}\|\right)\leq\del(\ve_0)
$$
where $\del(\ve_0)\to 0$ as $\ve_0\to 0$, and $r_1$ is a constant. Next, as was shown in Section \ref{Sec5.2}, in the stationary case we have   $\sig^2=\Pi_{T}''(0)$ where $\Pi_T(z)$ is a branch of $\ln \la_T(z)$. Thus, arguing as in the proof of \cite[Theorem 2.4 (b)]{HaNonl} we see that 
$$
|\sig_n^2-\text{Var}_{\mu_T}(S_n f)|\leq n\del_1(\ve_0)
$$
 where $\del_1(\ve_0)\to0$ as $\ve_0\to 0$. Thus by taking $\ve_0$ small enough (so that $\del_1(\ve_0)\leq \sig^2/4$) we  conclude that for all $n$ large enough we have $\sig_n^2\geq c_0n$, where $c_0=\frac12\sig^2$.
\end{proof}

\begin{remark}
The transfer operators $T\to\cL_T$ are continuous with respect to the dynamics $T$ for a variety of piecewise monotone one dimensional interval maps when all $T$'s are close to a single map $T$ in the Keller-Liverani norm (see \cite{KelLiv1}). In particular, we can consider the case when $T_j x=\beta_j x$ and $Tx=\beta x$. Then the conditions of Proposition \ref{ppp} hold true when $\sup_j|\beta_j-\beta|$ and $\sup_j|f_j-f|$ are small enough. In the multidimensional case, the conditions of Proposition \ref{ppp}  hold true when each $T_j$ is obtained from $T$ by a small perturbation of each inverse branch of $T$. For instance, let $\cX=[0,1)^d$ for some $d\in\bbN$ and  suppose that there is a partition of $[0,1)^d$ into cubes of the form $[a_1,b_1)\times\cdots\times[a_d,b_d)$ so that on each cube $T$ is expanding (and say, has a full image). Now, the conditions of Proposition \ref{ppp} are in force  if each $T_j$ is generated by gluing sufficiently small perturbations of each one of the latter expanding $T|_{I}$ maps on the elements $I$ of the partition.
\end{remark}

\subsubsection{Products of random non-stationary matrices in a 
``neighborhood" of a single random matrix}
In this section we will briefly discuss how to extend the ideas from the previous section to uniformly bounded products of independent but not identically distributed random matrices, whose probability laws belong to a sufficiently small neighborhood of a single probability law with the properties described in Section \ref{SecMat}. 

Let us recall that the transfer operator corresponding to a probability distribution on $G=GL(V)$ is given by 
$$
\cL_\mu h(x)=\int_{G}h(g\cdot x)d\mu(g).
$$
Then, for every two probability laws $\mu_1,\mu_2$ on $G$ we have 
$$
\sup_x|\cL_{\mu_1}h(x)-\cL_{\mu_2}h(x)|\leq \sup|h|\|\mu_1-\mu_2\|_{TV}.
$$
Moreover, given two vectors $x,y\in V$, for every H\"older continuous function  $h$ with exponent $\al$ we have
$$
\left|\left(\cL_{\mu_1}h(x)-\cL_{\mu_1}h(y)\right)-\left(\cL_{\mu_2}h(x)-\cL_{\mu_2}h(y)\right)\right|=\left|\int \left(h(g\cdot x)-h(g\cdot y)\right)\left(d\mu_1(g)-d\mu_2(g)\right)\right|$$$$\leq\sup_{g\in S(\mu_1)\cup S(\mu_2)}|h(g\cdot x)-h(g\cdot y)|\|\mu_1-\mu_2\|_{TV}\leq \sup_{g\in S(\mu_1)\cup S(\mu_2)}\|g(x-y)\|^\al v_\al(h)|\|\mu_1-\mu_2\|_{TV}
$$
where $S(\mu)$ denotes the support of $\mu$. We thus see that 
$$
v_\al(\cL_{\mu_1}h-\cL_{\mu_1}h)\leq \|\mu_1-\mu_2\|_{TV}v_\al(h)\sup_{g\in S(\mu_1)\cup S(\mu_2)}\|g\|^\al.
$$
We conclude that 
\begin{equation}\label{Stability}
\|\cL_{\mu_1}-\cL_{\mu_2}\|_\al\leq\max\left(1,\sup_{g\in S(\mu_1)\cup S(\mu_2)}\|g\|^\al\right)\|\mu_1-\mu_2\|_{TV}. 
\end{equation}

Now, let us take an independent sequence $g_1,g_2,...$ of uniformly bounded random matrices which are not necessarily uniformly distributed. Let us denote that law of $g_i$ by $\mu_i$. 
Let $\mu$ be a  probability distribution  with the properties described at the beginning of Section \ref{SecMat} (so that Assumptions  \ref{Ass1 mat} and \ref{Ass2 mat} are in force and Proposition \ref{MatProp} holds true). Let us further assume that $S(\mu)$ is bounded.
Set
$$
\ve=\sup_i\|\mu_i-\mu\|_{TV}.
$$
 Then by \eqref{Stability},
$$
\lim_{\ve\to0}\sup_i\|\cL_{\mu_i}-\cL_{\mu}\|_\al=0.
$$
 Thus, as in the previous section, when $\ve$ is small enough we get an appropriate complex sequential spectral gap for the complex perturbations of the operators. This leads to the non-uniform Berry-Esseen theorem of all powers $m$ (and all their applications). That is, Theorem \ref{BE2} and  Corollary \ref{Cor} hold with all $m$.

\end{document}